\documentclass[a4paper,11pt, leqno]{article}
\usepackage[latin1]{inputenc}
\usepackage{hyperref}
\usepackage{articlemacro}
\textwidth 145mm
\textheight 214mm

\bibliographystyle{amsplain}


\numberwithin{equation}{section}

\DeclareMathOperator{\Sph}{Sph}


\begin{document}

\title{Spherical Spaces}

\author{Torsten Wedhorn}

\maketitle


\noindent{\scshape Abstract.\ }
The notion of a spherical space over an arbitrary base scheme is introduced as a generalization of a spherical variety over an algebraically closed field. It is studied how the sphericity condition behaves in families. In particular it is shown that sphericity of subgroup schemes is an open and closed condition over arbitrary base schemes generalizing a result by Knop and R\"ohrle. Moreover spherical embeddings are classified over arbitrary fields generalizing and simplifying results by Huruguen.

\medskip

\noindent{\scshape MSC.\ }
14M27, 14L30, 14M17, 20G15


\section*{Introduction}

Spherical varieties over an algebraically closed field are normal varieties with the action of a reductive group $G$ such that a Borel subgroup of $G$ acts with an open dense orbit. They generalize several important classes of varieties: projective homogeneous $G$-varieties, symmetric varieties, and toric varieties. In this paper we generalize the notion of a spherical variety to algebraic spaces over an arbitrary base scheme $S$. If $G$ is a reductive group over $S$, then a \emph{spherical $G$-space over $S$} is a flat separated algebraic space of finite presentation over $S$ with a $G$-action such that all geometric fibers are spherical varieties (see below for precise definitions). A flat and finitely presented subgroup scheme $H$ of $G$ is called \emph{spherical} if $G/H$ is a spherical $G$-space. The first main result is the following (Corollary~\ref{SphericityOpen} and Theorem~\ref{SphericalOpenClosed}).

\begin{intro-theorem}
\begin{assertionlist}
\item
The property for a flat finitely presented $G$-space with normal geometric fibers to be spherical is open and constructible on the base scheme.
\item
The property for a flat finitely presented subgroup scheme $H$ of $G$ to be spherical is open and closed on the base scheme.
\end{assertionlist}
\end{intro-theorem}

The second result generalizes a result of Knop and R\"ohrle (\cite[Theorem~3.4]{KnRo}) who considered the case that $S$ is the spectrum of a Dedekind domain and that $G$ is a split reductive group scheme\footnote{In \cite{KnRo} Knop and R\"ohrle state already in a footnote that their result is ``surely valid in greater generality'' and here this prediction is shown to be true.}. It is proved by reducing to this case and invoking their result.

In the second part of the paper we study spherical spaces over arbitrary fields. Over an algebraically closed field spherical varieties are classified in two steps.
\begin{assertionlist}
\item
One first classifies spherical embeddings of a homogeneous spherical $G$-variety by combinatorial objects called colored fans (generalizing the classification of toric varieties by fans). This is done by the theory of Luna, Vust, and Knop (\cite{LunaVust} in characteristic $0$, \cite{Knop_LunaVust} for arbitrary characteristic). 
\item
Then one classifies spherical subgroups $H$ of $G$. This classification is now complete in characteristic $0$ by work of Luna \cite{Luna}, Losev \cite{Losev}, Cupit-Foutou \cite{CuFo}, and Bravi and Pezzini \cite{BP1}, \cite{BP2}, \cite{BP3}. But it is still open in positive characteristic.
\end{assertionlist}
We have nothing new to say for the second point (except for some trivialities about classification of forms in Section~\ref{CLFORM}). Here we generalize the first classification result to spherical spaces over arbitrary fields. There are two technical ingredients here. We first show that Knop's methods of classification by colored fans over algebraically closed fields can easily be generalized to the case of separably closed field (Theorem~\ref{ClassifySepClosed}). Then we use Galois descent to classify spherical embeddings of a spherical homogeneous $G$-space in terms of Galois invariant colored fans (Theorem~\ref{ClassField}). Similar results have already been obtained by Huruguen in \cite{Hur}. The results here are slightly more general as we consider also non-perfect base fields and as we do not assume that a homogeneous space has a rational point. They are also simplified by working systematically with algebraic spaces instead of schemes which allows us to get rid of Huruguen's technical condition~(ii).

\tableofcontents

\bigskip\bigskip

\noindent\textbf{Notation}

\smallskip

\noindent\textit{Algebraic Spaces}.
An \emph{algebraic space} over a scheme $S$ is an fppf-sheaf $X$ on the category of $S$-schemes such that the diagonal $X \to X \times_S X$ is representable by a morphism of schemes and such that there exists a surjective \'etale morphism $\Xtilde \to X$, where $\Xtilde$ is an $S$-scheme.

Hence the notion of algebraic space is the one in the sense of \cite[Definition~Tag~025Y]{Stacks}. All occurring algebraic spaces will be quasi-separated over some scheme and therefore they will be Zariski locally quasi-separated. Hence they are reasonable (and decent) in the sense of \cite[Tag~03I8]{Stacks} by \cite[Tag~03JX]{Stacks}.

Moreover all Zariski-local results of references such as \cite{LM_Stacks} or \cite{Rom_Comp}, where algebraic spaces are assumed to be quasi-separated, are still valid.

\smallskip

\noindent\textit{Fibers and Geometric Fibers}.
For each point $x$ of the underlying topological space of a decent algebraic space $X$ (we simply write $x \in X$) there exists a field $k$ and a monomorphism $\Spec(k) \to X$, and this monomorphism is unique up to unique isomorphism (\cite[Tag 03K4]{Stacks}). The field $k$ is called the \emph{residue field of $x$} and denoted by $\kappa(x)$.

If $X$ is an algebraic space over some algebraic space $S$ and $T \to S$ is a morphism of algebraic spaces, then we write $X_T$ instead of $X \times_S T$. If $T = \Spec(A)$ is an affine scheme, we also write $X_A$. For $s \in S$ we denote by $X_s$ the fiber $X_{\kappa(s)}$.

A geometric point is a morphism $\Spec k \to S$, where $k$ is an algebraically closed field. For $s \in S$ we write $\sbar = \Spec(\kappa(s)^{a})$, where $\kappa(s)^a$ is an algebraic closure of $\kappa(s)$. If $X \to S$ is an algebraic space over $S$, then $X_{\sbar} := X \times_S \sbar$ is called \emph{geometric fiber} of $X$ in $s$.

\smallskip

\noindent\textit{Subspaces}.
An immersion (resp.~an open immersion, resp.~a closed immersion) of algebraic spaces is a morphism of algebraic spaces that is representable and an immersion (resp.~an open immersion, resp.~a closed immersion) (\cite[Tag 03HB]{Stacks}). Two immersions whose target is an algebraic space $X$ define the same \emph{subspace} of $X$ if each factors through the other (necessarily uniquely as immersions are monomorphisms in the category of algebraic spaces). An immersion of algebraic spaces induces a locally closed embedding on the underlying topological spaces (\cite[Tag 04CD]{Stacks}). Conversely, if $X$ is an algebraic space and $T$ is a locally closed subset of the underlying topological space of $X$, then there exists a unique reduced subspace $Z \mono X$ whose underlying topological space is $T$ (\cite[Tag 06EC]{Stacks}).

\smallskip

\noindent\textit{Algebraic Groups}.
If $k$ is a field, an \emph{algebraic group} over $k$ is by definition a group scheme of finite type over $k$ (not necessarily smooth).

\paragraph*{Acknowledgements}
I am grateful to Jochen Heinloth for several useful discussions and to the referee for pointing out that several results on algebraic spaces needed more detailed explanations.


\section{Spherical varieties}

Let $k$ be an algebraically closed field and let $G$ be a reductive algebraic group over $k$. A normal connected separated $G$-scheme $X$ of finite type over $k$ is called \emph{spherical} or \emph{$G$-spherical} if it satisfies the following equivalent conditions.
\begin{equivlist}
\item\label{Sphi}
There exists a Borel subgroup $B$ of $G$ which has an open orbit in $X$. 
\item\label{Sphii}
There exists a Borel subgroup $B$ of $G$ and a point $x \in X(k)$ such that $\dim \Stab_B(x) = \dim B - \dim X$.
\item\label{Sphiii}
There exists a Borel subgroup $B$ of $G$ such that $B$ acts with finitely many orbits on $X$.
\end{equivlist}
The equivalence of \ref{Sphi} and \ref{Sphii} is clear and the equivalence of \ref{Sphi} and \ref{Sphiii} follows from by \cite[Corollary~2.6]{Knop_OrbitBorel}. As all Borel subgroups are conjugate, all conditions hold for every Borel subgroup of $G$ if they hold for one Borel subgroup. Note that we assume that by definition a connected space is non-empty implying that spherical varieties are non-empty.

We need the fact that ``having finitely many orbits'' is stable under change of the base field. Here we will use this fact only for schemes over algebraically closed fields (see Corollary~\ref{AlgClosedExt} below). But for later reference this result is formulated more generally than needed here. We fix the following notation. Let $\kappa$ be a field, let $H$ be a smooth algebraic group over $\kappa$, and let $X$ be an algebraic space of finite type over $\kappa$ with an action of $H$. 

For schemes of finite type over an algebraically closed field with an action by a smooth algebraic group it is equivalent having finitely many orbits or having finitely many invariant reduced subschemes. In this case the orbits are the minimal invariant reduced subspaces in the following sense. 

\begin{definition}\label{MinimalSubscheme}
An $H$-invariant subspace $Y$ of $X$ is called \emph{minimal} if there exists no proper non-empty $H$-invariant subspace of $Y$.
\end{definition}

If $Y$ is any $H$-invariant subspace, then the underlying reduced subspace $Y_{\rm red}$ is $H$-invariant because $H$ is smooth and hence geometrically reduced. This shows in particular that minimal $H$-invariant subspaces are reduced.

Algebraic spaces of finite type over an arbitrary field $\kappa$ with an action by a smooth algebraic group might not have any orbit (defined over the base field) because they might not have $\kappa$-rational points. But if they have a rational point than a minimal $H$-invariant subspace is the same as an orbit:
 
\begin{lemma}\label{MinimalSubspaceQuot}
An $H$-invariant subspace $Y$ that has a $\kappa$-rational point $x$ is minimal if and only if it is $H$-equivariantly isomorphic to $H/\Stab_H(x)$.

In particular $Y$ is a smooth scheme over $\kappa$ in this case.
\end{lemma}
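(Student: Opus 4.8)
The plan is to prove the two implications separately, with the equivariant isomorphism $Y\cong H/\Stab_H(x)$ as the bridge, and to obtain the last sentence from the well-known structure of quotients of algebraic groups over a field. Throughout write $H':=\Stab_H(x)$ for the scheme-theoretic stabilizer, a finite-type (hence, over the field $\kappa$, flat) closed subgroup scheme of $H$.

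For the implication ``$Y$ minimal $\Rightarrow Y\cong H/H'$'': since $Y$ is $H$-invariant and $x\in Y(\kappa)$, the orbit morphism $H\to X$, $h\mapsto h\cdot x$, factors through $Y$. By the standard theory of algebraic group actions it factors as $H\to H/H'\overset{j}{\hookrightarrow}Y$, where $j$ is an $H$-equivariant immersion whose image is the orbit of $x$. That orbit is a non-empty $H$-invariant subspace of $Y$, so minimality of $Y$ forces $j$ to be an isomorphism. This is the only step that is not purely formal: in the scheme case it is classical (Chevalley's theorem that the image is constructible, together with homogeneity, and representability of $H/H'$), and in the present generality one reduces it to the scheme case via an \'etale presentation of $X$ (or invokes a reference for group actions on algebraic spaces). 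For the converse ``$Y\cong H/H'\Rightarrow Y$ minimal'' it suffices to show that $H/H'$ has no proper non-empty $H$-invariant subspace. An invariant subspace $Z$ of $H/H'$ pulls back, along the faithfully flat morphism $\Spec\bar\kappa\to\Spec\kappa$, to an $H_{\bar\kappa}$-invariant subspace $Z_{\bar\kappa}$ of $(H/H')_{\bar\kappa}=H_{\bar\kappa}/H'_{\bar\kappa}$; since immersions, non-emptiness and the property ``$=$ the whole space'' are all preserved and reflected by this base change, we may assume $\kappa=\bar\kappa$. A non-empty $Z$ then has a $\bar\kappa$-point $y$ (take one from an \'etale chart), and because $H/H'$ is a single $H$-orbit, invariance of $Z$ forces the immersion $Z\hookrightarrow H/H'$ to be surjective on $\bar\kappa$-points; a locally closed subspace of a finite-type algebraic space over $\bar\kappa$ containing all $\bar\kappa$-points must be everything (its closure is everything by density of the $\bar\kappa$-points, and the closed complement of the locally closed subspace, having no $\bar\kappa$-point, is empty). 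Hence $Z=H/H'$.

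Finally, for the ``in particular'': the quotient $H/\Stab_H(x)$ is a scheme by the classical representability theorem for quotients of finite-type group schemes over a field by closed (flat) subgroup schemes, and it is smooth over $\kappa$ because it is flat over $\kappa$ (the base being a field) while $H\to H/\Stab_H(x)$ is faithfully flat of finite presentation with $H$ smooth over $\kappa$; equivalently, its geometric fibre is a homogeneous space under a smooth group over an algebraically closed field and is therefore smooth. The main obstacle is thus the single input that the orbit map realizes $H/\Stab_H(x)$ as a locally closed subspace of $X$; everything else is descent along $\Spec\bar\kappa\to\Spec\kappa$, density of $\bar\kappa$-points in finite-type algebraic spaces, and descent of smoothness.
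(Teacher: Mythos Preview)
Your proof is correct, but the key direction (minimal $\Rightarrow$ quotient) is organized differently from the paper's. You invoke as a black box the fact that the orbit map factors as $H\to H/H'\overset{j}{\hookrightarrow}Y$ with $j$ an \emph{immersion}, and then let minimality force $j$ to be surjective, hence an isomorphism onto the reduced $Y$. The paper instead argues directly with the orbit morphism $f\colon H\to Y$: since $Y$ is minimal it is reduced, so $f$ is generically flat, hence flat everywhere by homogeneity, hence open; minimality then gives surjectivity, so $f$ is faithfully flat and thus an fppf epimorphism, and the induced $H/H'\to Y$ is a monomorphism and an epimorphism of fppf sheaves, hence an isomorphism. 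The paper's route is more self-contained---the ``orbits are locally closed'' statement you cite is itself typically proved via the same generic-flatness-plus-homogeneity argument---and it applies uniformly to algebraic spaces without the reduction step you flag as ``not purely formal''. Conversely, your treatment of the easy direction (quotient $\Rightarrow$ minimal) is more explicit than the paper's, which dismisses it in one clause; your descent-to-$\bar\kappa$ and density-of-points argument is fine.
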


\begin{proof}
Indeed, the condition is certainly sufficient. Conversely suppose that $Y$ is minimal. Let $f\colon H \to Y$ be the orbit morphism of $\kappa$-spaces given by $h \sends h\cdot x$. Then $f$ is generically flat because $Y$ is reduced (\cite[Tag 06QS]{Stacks}) and hence flat by homogenity. Therefore $f$ is open (\cite[Tag 042S]{Stacks}). Its image is an $H$-invariant subspace of $Y$, hence $f$ is surjective because of the minimality of $Y$. Therefore $f$ is faithfully flat and hence an epimorphism in the category of fppf-sheaves over $\kappa$. The induced morphism $H/\Stab_H(x) \to Y$ is therefore a monomorphism and an epimorphism of fppf-sheaves. Hence it is an isomorphism.
\end{proof}

We now show that having finitely many minimal invariant subspaces is invariant under change of base field.

\begin{lemma}\label{FiniteOrbit}
Let $\kappa$ be a field, let $H$ be a smooth algebraic group over $\kappa$ and let $X$ be an algebraic space over $\kappa$ of finite type with $H$-action. Let $K$ be a field extension of $\kappa$. Then $X$ has only finitely many minimal $H$-invariant subspaces if and only if $X_K$ has only finitely many minimal $H_K$-invariant subspaces.
\end{lemma}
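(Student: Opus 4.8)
The plan is to reduce to the case where $K$ is either algebraic over $\kappa$ or is a finitely generated field extension, and then to handle each case separately, exploiting that the operation $Y \mapsto Y_K$ (base change) and $Z \mapsto$ "image of $Z$ in $X$" (descent along the flat morphism $X_K \to X$) set up a correspondence between invariant subspaces upstairs and downstairs. Since any field extension is a filtered union of finitely generated subextensions and minimal invariant subspaces of $X_K$ are of finite type (hence already defined over some finitely generated subextension, by a standard limit argument since $X$ is of finite type), we may assume $K/\kappa$ is finitely generated. Writing $K$ as a finite extension of a purely transcendental extension $\kappa(t_1,\dots,t_n)$, and using transitivity, it suffices to treat (a) $K/\kappa$ finite (possibly inseparable), and (b) $K = \kappa(t)$ purely transcendental.

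For the "only if" direction (finiteness downstairs implies finiteness upstairs), I would argue: if $X$ has finitely many minimal $H$-invariant subspaces, consider any minimal $H_K$-invariant subspace $Y \subseteq X_K$. Its image $\bar Y$ under the projection $p\colon X_K \to X$ is an $H$-invariant subspace (as $p$ is flat and surjective, so images of invariant subspaces are invariant), and $\bar Y$ contains a minimal one $X_0$; then $Y \subseteq p^{-1}(X_0) = (X_0)_K$, so it suffices to bound the number of minimal $H_K$-invariant subspaces of $(X_0)_K$ for each of the finitely many $X_0$. Thus we may assume $X$ itself is minimal over $\kappa$. In the finite case, $X_K$ has the same underlying topological space as $X$ up to a finite universal homeomorphism, so there are only finitely many closed subsets and hence (since minimal invariant subspaces are reduced) only finitely many candidates. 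In the purely transcendental case $K = \kappa(t)$, a minimal $H_K$-invariant subspace $Y$ of $X_K$ spreads out to an $H$-invariant subspace of $X \times_\kappa U$ for some open $U \subseteq \mathbb{A}^1_\kappa$, whose generic fiber is $Y$; pulling back along $\kappa$-points of $U$ (after enlarging $\kappa$, or just using that $U$ is nonempty and $\mathbb{A}^1$ has many points) forces $X$ to have an invariant subspace strictly smaller than all of $X_\kappa$ unless $Y$ itself is "constant", i.e. equals $(X)_{K}$; hence $X_K$ is minimal. (Alternatively and more cleanly: the generic point of $\mathbb{A}^1_\kappa$ gives a map $\mathrm{Spec}\,K \to \mathrm{Spec}\,\kappa$ that is a limit of smooth affine morphisms, so invariant subspaces of $X_K$ extend to invariant subspaces of $X_U$ for $U$ smooth affine, and specialization at closed points of $U$ then transports finiteness.)

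For the "if" direction (finiteness upstairs implies finiteness downstairs), suppose $X_K$ has finitely many minimal $H_K$-invariant subspaces. Take any minimal $H$-invariant subspace $Z \subseteq X$; then $Z_K$ is a nonempty $H_K$-invariant subspace of $X_K$, hence contains at least one minimal $H_K$-invariant subspace $Y$. The image of $Y$ in $Z$ is a nonempty $H$-invariant subspace of $Z$, hence equals $Z$ by minimality; this means the assignment $Z \mapsto \{\text{minimal }H_K\text{-invariant subspaces of }Z_K\}$ hits, for distinct $Z$'s, disjoint nonempty subsets of the (finite) set of minimal $H_K$-invariant subspaces of $X_K$ — because a minimal $H_K$-invariant subspace lies in $Z_K$ iff its image in $X$ is contained in $Z$, and the images of two distinct minimal $Z$, $Z'$ are $Z$, $Z'$ themselves which are distinct. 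Therefore the number of minimal $H$-invariant subspaces of $X$ is at most the number of minimal $H_K$-invariant subspaces of $X_K$, which is finite.

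The main obstacle I expect is the "only if" direction in the purely transcendental case: controlling minimal invariant subspaces of $X_{\kappa(t)}$ in terms of those of $X_\kappa$ requires a genuine spreading-out/specialization argument, and one must be careful that specialization at a closed point of the parameter variety sends a minimal invariant subspace to a (possibly non-minimal, but still invariant, hence containing a minimal) invariant subspace — and that, if one had infinitely many minimal invariant subspaces upstairs, the images would have to repeat, contradicting their "genericity". Making this precise with algebraic spaces (rather than schemes) is where the limit arguments of \cite[Tag 07SK]{Stacks}-type results and the decent/quasi-separated hypotheses are used; everything else is formal manipulation of flat descent and base change for invariant subspaces.
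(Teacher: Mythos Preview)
Your ``if'' direction (finiteness upstairs implies finiteness downstairs) is correct, and in fact more explicit than the paper, which simply calls this direction ``clearly sufficient''.

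The ``only if'' direction, however, contains a genuine error and misses the key simplification.

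The error is in your treatment of finite extensions. You assert that for $K/\kappa$ finite, ``$X_K$ has the same underlying topological space as $X$ up to a finite universal homeomorphism, so there are only finitely many closed subsets''. The morphism $X_K \to X$ is a universal homeomorphism only when $K/\kappa$ is purely inseparable; for a separable extension it is finite \'etale and typically not a homeomorphism. More importantly, even after reducing to $X$ minimal over $\kappa$, $X_K$ will usually have infinitely many closed subsets (take $H = \mathbb{G}_m$ acting on $X = \mathbb{G}_m$ by translation: $X$ is minimal, but $X_K$ has infinitely many closed points for any $K$). So this step does not go through.

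The missed idea is Lemma~\ref{MinimalSubspaceQuot}, which makes the whole finite/transcendental dichotomy unnecessary. The paper argues as follows: pass to a finite extension $\kappa'$ of $\kappa$ (enlarging $K$ accordingly) so that each of the finitely many minimal $H$-invariant subspaces $Y$ acquires a rational point $x$. By Lemma~\ref{MinimalSubspaceQuot} one then has $Y \cong H/\operatorname{Stab}_H(x)$. Since formation of such a quotient commutes with \emph{arbitrary} base change, $Y_K \cong H_K/(\operatorname{Stab}_H(x))_K$ is again a homogeneous space, hence again minimal, for every extension $K$. The finitely many $Y_K$ cover $X_K$, and every minimal $H_K$-invariant subspace of $X_K$ must coincide with one of them. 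This handles all field extensions in one stroke; no spreading-out or specialization is needed.

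Your transcendental sketch might be completable, but it is working hard to recover something that is immediate once one recognises that a minimal invariant subspace with a rational point is an orbit $H/\operatorname{Stab}_H(x)$, and that being an orbit is stable under base change.
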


\begin{proof}
The condition is clearly sufficient. Hence let us assume that $X$ has only finitely many munimal $H$-invariant subspaces. Passing to a finite extension $\kappa'$ of $\kappa$ and replacing $K$ by a field extension containning both $\kappa'$ and $K$ we may assume that every minimal $H$-invariant subspace $Y$ of $X$ has a $\kappa$-rational point. Then such a subspace $Y$ is isomorphic to a quotient of $H$ by Lemma~\ref{MinimalSubspaceQuot}. As the formation of quotients commutes with base change, such a minimal $H$-invariant smooth subscheme stays minimal $H$-invariant and smooth after the base change $\kappa \to K$. Therefore there are only finitely many minimal $H_K$-invariant subspaces of $X_K$.
\end{proof}

\begin{corollary}\label{AlgClosedExt}
Let $K$ be an algebraically closed extension of the algebraically closed field $k$ and let $X$ be $G$-scheme over $k$. Then $X$ is spherical if and only if $X_K$ is spherical.
\end{corollary}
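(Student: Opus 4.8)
The plan is to verify that each ingredient of the definition of sphericity --- normality, connectedness, separatedness, finite type, and the existence of a Borel subgroup acting with finitely many orbits --- passes in both directions under extension of scalars from $k$ to $K$.

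First I would dispose of the cheap parts. Separatedness and being of finite type are preserved under arbitrary base change, and they also descend along the faithfully flat morphism $X_K \to X$. For connectedness: $X_K \to X$ is surjective, being the base change of the surjection $\Spec K \to \Spec k$, so $X_K$ connected forces $X$ connected; conversely $X$ is geometrically connected because $k$ is algebraically closed, whence $X_K$ is connected. For normality one argues in the same spirit: over the perfect field $k$ a normal scheme is geometrically normal, so $X_K$ is again normal; conversely, Zariski-locally $X_K \to X$ corresponds to a faithfully flat ring map, and normality of its target descends to its source.

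The real content is the orbit condition, and here I would use the characterization \ref{Sphiii} rather than \ref{Sphi}, precisely because an open dense orbit over $K$ need not obviously be defined over $k$. Fix a Borel subgroup $B$ of $G$; then $B_K$ is a Borel subgroup of $G_K$, since the base change of a Borel subgroup is again one. By \ref{Sphiii}, $X$ is spherical if and only if $B$ acts on $X$ with finitely many orbits, and $X_K$ is spherical if and only if $B_K$ acts on $X_K$ with finitely many orbits. As recalled in the discussion preceding Definition~\ref{MinimalSubscheme}, for a scheme of finite type over an algebraically closed field with an action of a smooth algebraic group, ``having finitely many orbits'' is equivalent to ``having finitely many minimal invariant subspaces''. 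Now I would apply Lemma~\ref{FiniteOrbit} with $\kappa = k$, $H = B$, and the field extension $K/k$: it yields that $X$ has finitely many minimal $B$-invariant subspaces if and only if $X_K$ has finitely many minimal $B_K$-invariant subspaces. Chaining these equivalences gives the corollary.

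I do not expect a genuine obstacle. The base-change statements for normality and connectedness are routine, and the heart of the matter --- Lemma~\ref{FiniteOrbit}, whose proof writes a minimal invariant subspace with a rational point as a quotient $H/\Stab_H(x)$ and uses that quotients commute with base change --- has already been established. The only point requiring care is to invoke the ``finitely many orbits'' form of sphericity, so that Lemma~\ref{FiniteOrbit} applies directly after passing to minimal invariant subspaces.
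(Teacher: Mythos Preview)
Your proposal is correct and follows essentially the same route as the paper: one checks that finite type, separatedness, connectedness, and normality pass in both directions (the paper cites \cite[(4.5.1) and (6.7.7)]{EGA4} for the nontrivial direction of connectedness and normality), and then invokes Lemma~\ref{FiniteOrbit} with $H$ a Borel subgroup of $G$. Your explicit remark that one should use characterization~\ref{Sphiii} so that Lemma~\ref{FiniteOrbit} applies directly is a helpful clarification that the paper leaves implicit.
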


\begin{proof}
Note first that $X$ is of finite type if and only if $X_K$ is of finite type. Moreover, if $X_K$ is connected (resp.~normal), then clearly $X$ is connected (resp.~normal). The converse holds by \cite[(4.5.1) and (6.7.7)]{EGA4}. Hence we are done by Lemma~\ref{FiniteOrbit} applied to a Borel subgroup $H$ of $G$.
\end{proof}


\section{Spherical spaces}

Let $S$ be a scheme and let $G$ be a reductive group scheme over $S$ (i.e., $G$ is a smooth affine group scheme over $S$ whose geometric fibers are connected reductive groups).

\begin{definition}\label{DefSphericalSpace}
An algebraic space $X$ over $S$ with an action by $G$ is called \emph{spherical} or \emph{$G$-spherical} if it is flat, separated, and of finite presentation over $S$ and if for all $s \in S$ the geometric fiber $X_{\sbar}$ is a spherical $G_{\sbar}$-variety.
\end{definition}

As spherical varieties are by definition non-empty, the structure morphism $X \to S$ of a spherical space is always surjective and hence faithfully flat.

\begin{remark}\label{RemSphericalSpace}
If $S = \Spec(k)$ for a field $k$, then a $G$-spherical space over $k$ is an algebraic space $X$ over $k$ with $G$-action such that $X_K$ is a $G_K$-spherical variety for some (or equivalently by Lemma~\ref{AlgClosedExt}) for any algebraically closed extension $K$ of $k$. There exists then a finite separable field extension $k'$ such that $X_{k'}$ is a scheme (\cite[Tag~0B84]{Stacks}).
\end{remark}

\begin{remark}\label{PermSpherical}
Let $X$ be an algebraic space over $S$ with $G$-action. Let $S' \to S$ be a morphism of schemes.
\begin{assertionlist}
\item\label{PermSpherical1}
If $X$ is $G$-spherical, then the $S'$-space $X_{S'}$ is $G_{S'}$-spherical.
\item\label{PermSpherical2}
Let $S' \to S$ be an fpqc-covering and assume that $X_{S'}$ is $G_{S'}$-spherical. Then $X$ is $G$-spherical. 
\end{assertionlist}
\end{remark}

\begin{proof}
The property for a morphism of algebraic spaces to be flat (resp.~separated, resp.~of finite presentation) is stable under base change (\cite[Tags 03MO,03KL,03XR]{Stacks}) and fpqc local on the base (\cite[Tags 041W,0421,041V]{Stacks}).

Moreover, the geometric fibers of $X_{S'}$ are base changes of geometric fibers of $X$. If $S' \to S$ is an fpqc-covering, then $S' \to S$ is surjective, and for every geometric fiber $X_{\sbar}$ of $X$ there is a geometric fiber of $X_{S'}$ which is the base change of $X_{\sbar}$. Hence we conclude by Corollary~\ref{AlgClosedExt}.
\end{proof}


\section{Openness of sphericity}

For the next result recall that there is the notion of constructible set in an arbitrary scheme $S$.

\begin{definition}\label{DefConst}
A subset $C$ of a scheme $S$ such that for every (equivalently, for one) open covering $(U_i)_i$ of $S$ by affine subschemes the intersections $S \cap U_i$ are for all $i$ finite unions of sets of the form $V_i \cap (U_i \setminus W_i)$, where $V_i,W_i \subseteq U_i$ are open quasi-compact subsets. 
\end{definition}

Here we follow the convention of \cite[\S7]{EGA1} (and not of \cite{EGA4}, where a constructible set in our sense is called locally constructible). If $S$ is noetherian, then then $C \subseteq S$ is constructible if and only if $C$ is a finite union of locally closed subsets.

\begin{theorem}\label{SphericalConst}
Let $S$ be a scheme, let $X$ be an algebraic space that is of finite presentation over $S$ and that has geometrically irreducible fibers. Let $H$ be a group scheme of finite presentation over $S$ that acts on $X$. Define
\[
C := \sett{s \in S}{there exists an open dense $H_{\sbar}$-orbit in $X_{\sbar}$}
\]
\begin{assertionlist}
\item\label{SphericalConst1}
The subset $C$ is constructible in $S$.
\item\label{SphericalConst2}
Suppose in addition that $H$ and $X$ are flat over $S$. Then $C$ is also open.
\end{assertionlist}
\end{theorem}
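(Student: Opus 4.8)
The plan is to rephrase ``$X_{\bar s}$ has a dense open $H_{\bar s}$-orbit'' as a condition on the fibre dimension of the stabilizer group space, and then to feed this into Chevalley's semicontinuity of fibre dimension for the first assertion and into ``flat of finite presentation implies open'' for the second. By a routine limit argument --- the datum $(S,H,X,\mathrm{action})$ and, for the second assertion, the flatness, all spread out over a finitely presented base $S_0$, and $C$ is the preimage of the corresponding subset of $S_0$ --- one may, if one likes, reduce to $S$ noetherian, which makes all constructibility bookkeeping trivial; I will not insist on this.

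Put $\mathcal S:=\underline{\mathrm{Stab}}_H(X)$, the fibre product of $(a,\mathrm{pr}_2)\colon H\times_S X\to X\times_S X$ with the diagonal $\Delta_X\colon X\to X\times_S X$. Since $\Delta_X$ is representable, a monomorphism, and (as $X\to S$ is quasi-separated of finite presentation) of finite presentation, $\mathcal S$ is an algebraic space of finite presentation over $X$, and $q\colon\mathcal S\to X$ is the stabilizer group space, with fibre $\mathcal S_x=\mathrm{Stab}_{H_{\kappa(x)}}(x)$ over $x\in X$. Let $e\colon X\to\mathcal S$ be the unit section, $\delta(x):=\dim_{e(x)}\mathcal S_x=\dim\mathcal S_x$, and $\rho(s):=\dim H_s-\dim X_s$. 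By Chevalley's theorem on the dimension of fibres, applied to the (finitely presented, hence locally of finite type) morphism $q$ --- reducing to schemes by \'etale localization if necessary --- the function $\delta$ is upper semicontinuous on $X$; and $\rho$ is a constructible function on $S$, fibre dimensions of finitely presented morphisms being constructible on the base. Now fix $s\in S$ and $x\in X_s$. Passing to $\overline{\kappa(x)}$ and using $\dim\mathrm{Stab}=\dim H-\dim(\mathrm{orbit})$ gives $\delta(x)=\dim H_s-\dim\bigl(H_{\overline{\kappa(x)}}\cdot\bar x\bigr)\ge\dim H_s-\dim X_s=\rho(s)$, with equality exactly when the orbit of $\bar x$ has maximal dimension $\dim X_s$. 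Because $X_s$, hence $X_{\bar s}$, is irreducible, an orbit of dimension $\dim X_{\bar s}$ exists in $X_{\bar s}$ if and only if $X_{\bar s}$ has a dense open $H_{\bar s}$-orbit (an orbit of full dimension has closure all of $X_{\bar s}$ and complement of strictly smaller dimension, hence is open and dense; the converse is clear), and --- the condition being insensitive to algebraically closed base change, cf.\ Section~1 --- this holds if and only if $\delta(\eta_s)=\rho(s)$, where $\eta_s$ is the generic point of $X_s$, at which $\delta$ attains its minimum on $X_s$ by upper semicontinuity. Writing $\pi_X\colon X\to S$ and $E:=\{x\in X:\delta(x)\le\rho(\pi_X(x))\}$, we obtain
\[
s\in C\iff\pi_X^{-1}(s)\cap E\neq\emptyset,\qquad\text{equivalently}\qquad C=\pi_X(E),
\]
and in particular $C\subseteq\mathrm{im}(\pi_X)$.

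For the first assertion: since $\delta\ge\rho\circ\pi_X$ everywhere on $X$, we have $E=\{x:\delta(x)=\rho(\pi_X(x))\}$, which, locally on $S$ --- where $X$ is quasi-compact and $\delta,\rho$ take only finitely many values --- is the finite union over $c$ of $\{\delta\le c\}\cap\pi_X^{-1}(\{\rho=c\})$; each summand is constructible in $X$, hence so is $E$, and then $C=\pi_X(E)$ is constructible in $S$ by Chevalley's theorem on images of finitely presented morphisms. For the second assertion: with $H$ and $X$ flat over $S$, the morphism $\pi_X$ is flat and locally of finite presentation, hence open, so $S_0:=\mathrm{im}(\pi_X)$ is open; moreover $\rho$ is locally constant on $S_0$, because the dimension of the fibres of a flat, locally finitely presented morphism is locally constant on the source --- apply this to $H\to S$ composed with $e$ to get $s\mapsto\dim H_s$ locally constant, and to the open surjection $X\to S_0$, the fibres $X_s$ being equidimensional since irreducible, to get $s\mapsto\dim X_s$ locally constant on $S_0$. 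Working locally on $S_0$ we may assume $\rho\equiv r$; then $E\cap X_{S_0}=\{x\in X_{S_0}:\delta(x)\le r\}$ is open in $X_{S_0}$ by upper semicontinuity of $\delta$, and $C=\pi_X(E)$ is the image of this open subspace under the open morphism $X_{S_0}\to S_0$, hence open.

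The only real work is in assembling the standard facts about fibre dimension --- upper semicontinuity of $\delta$ (Chevalley), constructibility of $\rho$, and its local constancy in the flat case --- in the framework of algebraic spaces, which is routine by \'etale-localizing to schemes, and in the elementary but slightly delicate point, which uses irreducibility of $X_{\bar s}$, that ``maximal-dimensional orbit'' coincides with ``dense open orbit''. Nothing deeper enters; in particular, unlike the later results on subgroup schemes, this statement does not use Knop--R\"ohrle.
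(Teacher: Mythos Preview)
Your proof is correct and follows essentially the same approach as the paper's: both introduce the stabilizer group space (the paper's $\mathcal{H}$ is your $\mathcal{S}$), identify $C$ as the image under $\pi_X$ of the locus $X^0=E$ where the stabilizer dimension equals $\dim H_s-\dim X_s$, show this locus is constructible by stratifying over the finitely many values of the relevant dimension functions and then apply Chevalley's image theorem, and for openness use that flatness makes $\rho$ locally constant so that $E=\{\delta\le r\}$ is open by upper semicontinuity and hence so is its image under the open map $\pi_X$.
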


\begin{proof}
Both assertions are Zariski locally on $S$ and hence we may assume that $S$ is affine. Hence $X$ is quasi-compact and quasi-separated because the structure morphism $\pi\colon X \to S$ is of finite presentation. In particular there exists an \'etale quasi-compact cover $\xi\colon \Xtilde \to X$ by an affine scheme $\Xtilde$.

Choose an integer $N$ such that all fibers of $H$ and of $X$ in points of $S$ have dimension $\leq N$ (\cite[14.105]{GW} applied to $H$ and $\Xtilde$ because one has $\dim(X_s) = \dim(\Xtilde_s)$ for all $s \in S$). For $x \in X$ we obtain an action of the fiber $H_{\pi(x)}$ of $H$ on the fiber $X_{\pi(x)}$ of $X$ and we denote by $\Stab_H(x)$ the stabilizer of the canonical morphism $\Spec \kappa(x) \to X_{\pi(x)}$ in $H_{\pi(x)}$, which is a closed subgroup scheme of $H \times_S \Spec(\kappa(x))$. Define
\begin{equation}\label{EqDefHomogGeneral}
X^0 := \set{x \in X}{\dim \Stab_H(x) = \dim H_{\pi(x)} - \dim X_{\pi(x)}}.
\end{equation}
Then $\pi(X^0) = C$ and hence it suffices by Chevalley's theorem (\cite[(7.2.9)]{EGA1} if $X$ is a scheme, \cite[Th\'eor\`eme 5.9.4]{LM_Stacks} in general) to show that $X^0$ is a constructible subset of $X$.

Fix an integer $d$ with $d \leq N$. Let $E_d$ be the set of $x \in X$ such that $\dim H_{\pi(x)} - \dim X_{\pi(x)} = d$. Let $F_d$ be the set of $x \in X$ such that $\dim \Stab_H(x) = d$. Then 
\[
X^0 = \bigcup_{d \leq N}(E_d \cap F_d).
\]
Hence to show that $X^0$ is constructible it suffices to show that $E_d$ and $F_d$ are constructible.

Now $E_d$ is constructible by \cite[(9.9.1)]{EGA4} (the result generalizes to algebraic spaces by applying it to $\Xtilde$ and again using that $\dim(X_s) = \dim(\Xtilde_s)$ for all $s \in S$). To see that $F_d$ is constructible, we define an algebraic space $\Hcal$ over $X$ by the cartesian diagram
\[\xymatrix{
\Hcal \ar[r]^g \ar[d] & X \ar[d]^{\Delta} \\
H \times_S X \ar[r]^{f} & X \times_S X,
}\]
where $f$ is the morphism $(h,x) \sends (h\cdot x,x)$ and where $\Delta$ is the diagonal. Then $\Stab_H(x)$ is the fiber $\Hcal_x$ of $g$ in $x$. Hence $F_d$ is constructible by \cite[10.96]{GW}. Here we again use that this result generalizes to algebraic spaces: We apply \cite[10.96]{GW} to the morphism of schemes $\tilde\Hcal := \Hcal \times_X \Xtilde \to \Xtilde$ to see that $\Ftilde_d := \set{\xtilde \in \Xtilde}{\dim(\tilde\Hcal_{\xtilde}) = d}$ is constructible in $\Xtilde$. As $\xi$ is locally of finite presentation and quasi-compact, $F_d = \xi(\Ftilde_d)$ is constructible by Chevalley's theorem.

It remains to show that $C$ is open if $H$ and $X$ satisfy the hypotheses in \ref{SphericalConst2}. As $X$ is flat and locally of finite presentation over $S$, the morphism $\pi$ is open. Hence it suffices to show that $X^0$ is open. Moreover the additional hypotheses on $H$ and $X$ imply that the maps $s \sends \dim(H_s)$ and $s \sends \dim(X_s)$ are locally constant on $S$. Hence we may assume that $e := \dim H_s - \dim X_s$ is independent of $s$. Then $X^0$ is the set, where $\Stab_H(x) = \Hcal_x$ has the minimal possible dimension $e$. Hence it is open by semi-continuity of fiber dimension (where we can argue as above by replacing $\Hcal \to X$ with the morphism of schemes $\tilde{\Hcal} \to \Xtilde$ and using that $\xi$ is open because it is \'etale).
\end{proof}

\begin{corollary}\label{SphericityOpen}
Let $S$ be a scheme, let $X$ be an algebraic space which is separated, flat, and of finite presentation over $S$ with geometrically normal and geometrically integral fibers. Let $G$ be a reductive group scheme over $S$ that acts on $X$. Then there exists an open constructible subscheme $U$ of $S$ such that a morphism of schemes $f\colon T \to S$ factors through $U$ if and only if $X_T$ is a spherical $G_T$-space.
\end{corollary}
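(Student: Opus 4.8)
The plan is to reduce this to Theorem~\ref{SphericalConst} by working over the scheme of Borel subgroups of $G$ and then descending openness and constructibility back down to $S$. First I would put
\[
C := \{\, s \in S \mid X_{\sbar} \text{ is a spherical } G_{\sbar}\text{-variety} \,\},
\]
and, once $C$ is known to be open, let $U$ be the open subscheme of $S$ with underlying set $C$; so everything reduces to showing that $C$ is open and constructible and has the asserted mapping property. Since $X \to S$ is separated, flat and of finite presentation with geometrically normal and geometrically integral fibers, every geometric fiber $X_{\sbar}$ is automatically a normal, connected, separated scheme of finite type over $\kappa(s)^a$ (that it is a scheme follows as in Remark~\ref{RemSphericalSpace} from \cite[Tag~0B84]{Stacks}); hence, by conjugacy of Borel subgroups, $s \in C$ if and only if some (equivalently every) Borel subgroup of $G_{\sbar}$ has an open dense orbit in $X_{\sbar}$.

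Next I would consider $p\colon Y := \mathrm{Bor}(G) \to S$, the scheme of Borel subgroups of $G$; it is smooth and projective over $S$ --- in particular faithfully flat, quasi-compact and of finite presentation, the latter because the smooth affine group scheme $G$ is of finite presentation over $S$ --- and it carries a universal Borel subgroup scheme $\mathcal{B} \subseteq G_Y$, which is smooth and affine, hence flat of finite presentation, over $Y$. The fibers of $X_Y := X \times_S Y \to Y$ are base changes of those of $X \to S$, hence geometrically integral and in particular geometrically irreducible. So the second assertion of Theorem~\ref{SphericalConst}, applied with $(S,X,H)$ replaced by $(Y,X_Y,\mathcal{B})$, shows that
\[
C_Y := \{\, y \in Y \mid \text{there is an open dense } \mathcal{B}_{\overline{y}}\text{-orbit in } (X_Y)_{\overline{y}} \,\}
\]
is open and constructible in $Y$. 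I would then check that $C_Y = p^{-1}(C)$: for $y \in Y$ over $s := p(y)$, choosing compatible algebraic closures identifies $(X_Y)_{\overline{y}}$ with $X_{\sbar} \times_{\sbar} \overline{y}$ and exhibits $\mathcal{B}_{\overline{y}}$ as a Borel subgroup of $G_{\overline{y}}$, and then Corollary~\ref{AlgClosedExt} (applied to a Borel subgroup of $G_{\sbar}$) says that $X_{\sbar}$ is spherical if and only if $X_{\sbar} \times_{\sbar} \overline{y}$ is, which by conjugacy of Borel subgroups is in turn equivalent to $y \in C_Y$. Since $p$ is faithfully flat and quasi-compact it is submersive, so $p^{-1}(C) = C_Y$ being open forces $C$ to be open in $S$; and since $p$ is of finite presentation, $C = p(C_Y)$ is constructible in $S$ by Chevalley's theorem (\cite[(7.2.9)]{EGA1}).

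Finally, for the mapping property I would argue directly from Corollary~\ref{AlgClosedExt}: for a morphism $f\colon T \to S$ and $t \in T$ one has $X_{\overline{t}} = X_{\overline{f(t)}} \times_{\overline{f(t)}} \overline{t}$, which is a spherical variety precisely when $X_{\overline{f(t)}}$ is, i.e.\ precisely when $f(t) \in C$; combined with the fact that flatness, separatedness and finite presentation of $X_T \to T$ are inherited from $X \to S$ by base change, this shows that $X_T$ is $G_T$-spherical if and only if $f(T) \subseteq C = U$, i.e.\ if and only if $f$ factors through the open subscheme $U$. The only point that requires any genuine care is the equality $C_Y = p^{-1}(C)$, which simply repackages the conjugacy of Borel subgroups together with the base-change invariance of sphericity over algebraically closed fields (Corollary~\ref{AlgClosedExt}); the rest is bookkeeping to make sure all the morphisms in sight are of finite presentation and that $p$ is faithfully flat and quasi-compact, so that Theorem~\ref{SphericalConst}, Chevalley's theorem, and flat descent of open subsets may be invoked. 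No new difficulty beyond Theorem~\ref{SphericalConst} itself should arise.
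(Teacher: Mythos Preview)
Your argument is correct. The approach differs from the paper's in one nontrivial respect: the paper first passes to an \'etale cover of $S$ on which $G$ becomes split (using \cite[Exp.~XXII, Cor.~2.4]{SGA3}), chooses a Borel subgroup scheme $B \subseteq G$ there, and applies Theorem~\ref{SphericalConst} with $H = B$ directly over (the cover of) $S$, afterwards verifying via Remark~\ref{PermSpherical} that the resulting open constructible locus represents the asserted functor. You instead avoid splitting $G$ by pulling back to $Y = \Bor(G)$, where the universal Borel $\mathcal{B}$ exists for free, and then push the open constructible set $C_Y$ back down to $S$ using that $p\colon Y \to S$ is faithfully flat, quasi-compact, and of finite presentation. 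Your route trades the \'etale-local splitting step for a descent along $p$; the paper's route trades the descent along $p$ for an \'etale-local verification that the locus $U$ is independent of the cover and represents the right functor. Both are standard mechanisms for producing a Borel subgroup when $G$ is not split, and neither requires any idea beyond Theorem~\ref{SphericalConst}; your identification $C_Y = p^{-1}(C)$ via Corollary~\ref{AlgClosedExt} and conjugacy of Borel subgroups plays exactly the role that Remark~\ref{PermSpherical} plays in the paper's proof.
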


\begin{proof}
We have to show that the functor on $S$-schemes
\[
(f\colon T \to S) \quad\sends \begin{cases}
\{f\},&\text{if $X_T$ is a spherical $G_T$-space}; \\
\emptyset,&\text{otherwise}
\end{cases}
\]
is representable by an open constructible subscheme of $S$. As this can be shown locally for the \'etale topology on $S$, we may assume that $G$ is a split reductive group scheme \cite[Exp. XXII, Cor.~2.4]{SGA3} and in particular that there exists a Borel subgroup scheme $B$ of $G$. Applying Theorem~\ref{SphericalConst} with $H = B$ we obtain an open constructible subscheme $U$ of $S$ such that $U = \sett{s \in S}{$X_{\sbar}$ is $G_{\sbar}$-spherical}$.

This represents the functor because being spherical is stable under base change and local for the fpqc topology. Indeed, if $f\colon T \to S$ factors through $U$, then $X_T = (X_U)_T$ is $G_T$-spherical by Remark~\ref{PermSpherical}~\ref{PermSpherical1}. Conversely, if $f$ does not factor through $U$, then there exists $t \in T$ such that $f(t) \in S \setminus U$. As $X_{f(t)}$ is not $G_{f(t)}$-spherical, $(X_T)_t = (X_{f(t)})_t$ is not $G_t$-spherical by Remark~\ref{PermSpherical}~\ref{PermSpherical2}. Hence $X_T$ cannot be $G_T$-spherical.
\end{proof}

\begin{remark}\label{HomogSpherical}
Let $G$ be a reductive group over a scheme $S$ and let $X$ be a spherical $G$-space. Then the open subspace $X^0$ of $X$ defined in \eqref{EqDefHomogGeneral} is given by the property that for every $s \in S$ the preimage of $X^0$ in the geometric fiber $X_{\sbar}$ is the unique open dense $G_{\sbar}$-orbit. In particular it is faithfully flat over $S$ and in each geometric fiber schematically dense.
\end{remark}

We conclude this section by studying limits of spherical spaces. Let $(S_i)_{i\in I}$ be a filtered projective system of quasi-compact quasi-separated schemes with affine transition morphism $\pi_{ij}\colon S_j \to S_i$. Let $S$ be its limit (which exists in the category of schemes). Assume that $I$ contains a smallest element $0$, let $G_0$ be a group scheme of finite presentation over $S_0$ and set $G_i := G_0 \times_{S_0} S_i$ and $G := G_0 \times_{S_0} S$. We first recall the following general fact that follows from \cite[Tag~07SK]{Stacks}.

\begin{remark}\label{Approx}
Let $X$ be an algebraic space of finite presentation over $S$ with an action by $G$. Then there exists $i \in I$, an algebraic space $X_i$ over $S_i$ with $G_i$-action, and a $G$-equivariant isomorphism $\alpha\colon X_i \times_{S_i} S \iso X$ of spaces over $S$.

Moreover, if $(i',X'_{i'},\alpha')$ is a second such triple, then there exists a $j \geq i,i'$ and a $G_j$-equivariant isomorphism $\tau_j\colon X_i \times_{S_i} S_j \iso X'_{i'} \times_{S_{i'}} S_j$ such that $\alpha' \circ (\tau_j \times \id_S) = \alpha$.
\end{remark}

Now we deduce from Theorem~\ref{SphericalConst} that being spherical is compatible with filtered limits in the following sense.

\begin{corollary}\label{SphericalInductive}
Let $G_0$ be reductive and let $X_0$ be an algebraic space of finite presentation of $S_0$ with a $G_0$-action. Suppose that $X := X_0 \times_{S_0} S$ is a $G$-spherical space. Then there exists an $i \in I$ such that $X_i := X_0 \times_{S_0} S_i$ is a $G_i$-spherical space.
\end{corollary}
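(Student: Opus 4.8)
The plan is to reduce to Theorem~\ref{SphericalConst} by checking that each of the defining conditions of sphericity — flatness, separatedness, finite presentation, and the fiberwise condition that the geometric fibers are spherical varieties — descends to some finite stage $S_i$. Finite presentation of $X_i$ over $S_i$ is automatic once $X_i$ exists (Remark~\ref{Approx}). The properties ``flat'', ``separated'', ``normal geometric fibers'', and ``geometrically irreducible fibers'' all spread out from $S$ to some $S_i$ by the standard limit formalism for algebraic spaces (\cite[Tag~07SK]{Stacks} and the results cited there, applied to $X_i$ for $i$ large; one may pass through an \'etale affine chart $\Xtilde_i \to X_i$ as in the proof of Theorem~\ref{SphericalConst} to import the scheme-theoretic statements). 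Since $I$ is filtered, finitely many such conditions can be arranged to hold simultaneously at a common index, so after renaming we may assume $X_i \to S_i$ is flat, separated, of finite presentation, with geometrically normal and geometrically irreducible fibers.

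Next I would arrange, again by \'etale-localizing on $S_i$ and using that reductivity and the existence of a Borel subgroup scheme spread out, that $G_i$ is reductive and admits a Borel subgroup scheme $B_i$; this is legitimate because the conclusion we want (existence of \emph{some} $i$ with $X_i$ spherical) can be checked \'etale-locally on the $S_i$, exactly as in the proof of Corollary~\ref{SphericityOpen}, and sphericity is fpqc-local on the base by Remark~\ref{PermSpherical}. Now apply Theorem~\ref{SphericalConst} to the group scheme $H = B_i$ acting on $X_i$: the set $C_i := \sett{s \in S_i}{\text{there is an open dense } (B_i)_{\sbar}\text{-orbit in } (X_i)_{\sbar}}$ is open in $S_i$. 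Combined with the already-arranged conditions on the fibers of $X_i$, for $s \in C_i$ the geometric fiber $(X_i)_{\sbar}$ is a normal, irreducible, separated variety with an open dense Borel orbit, hence $G_{i,\sbar}$-spherical; and conversely sphericity of $(X_i)_{\sbar}$ forces $s \in C_i$. So $X_i$ is $G_i$-spherical if and only if $C_i = S_i$.

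The final step is to propagate the hypothesis: by assumption $X = X_i \times_{S_i} S$ is $G$-spherical, so every geometric fiber of $X$ over $S$ has an open dense Borel orbit, i.e. the image of $S \to S_i$ lands in $C_i$. Since $C_i$ is open and $S_i \setminus C_i$ is constructible, its preimage under $S_j \to S_i$ stabilizes: writing $Z_i := S_i \setminus C_i$, the images of $Z_j$ in $S_i$ form a decreasing system of closed subsets whose intersection is the image of $Z := S \setminus (\text{preimage of } C_i)$, which is empty; by quasi-compactness (each $Z_i$ is quasi-compact as a closed subset of the quasi-compact $S_i$) and the filteredness of $I$, some $\pi_{ij}(Z_j) = \emptyset$, i.e. $C_j = S_j$ for some $j \ge i$. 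Replacing $i$ by $j$ gives $X_i$ $G_i$-spherical. The main obstacle is purely bookkeeping: making sure that the several ``spread-out'' statements (flatness, separatedness, fiberwise normality and irreducibility, reductivity and existence of a Borel, plus the constructibility/quasi-compactness input needed to kill the bad locus at a finite stage) are each legitimately available for algebraic spaces and can be aligned at a single index; none of the individual steps is deep, but the limit argument must be assembled carefully, in particular the passage through an \'etale chart to borrow the scheme-level limit theorems and the use of quasi-compactness of the complements $Z_i$ to conclude emptiness at a finite stage rather than only in the limit.
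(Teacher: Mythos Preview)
Your proposal is correct and follows essentially the same approach as the paper: spread out the ``easy'' hypotheses (separatedness, flatness, geometric normality and integrality of fibers) to some finite stage, then use the constructibility/openness of the spherical locus together with a limit argument to force the spherical locus to be everything at a further stage. The paper's write-up is somewhat more streamlined: it invokes Corollary~\ref{SphericityOpen} directly (so the \'etale-localization to obtain a Borel is already absorbed there, and you need not redo it), and for the final step it cites the colimit description of constructible subsets (\cite[10.57]{GW}) rather than arguing by hand with the complements $Z_j$; note also that $G_0$ is reductive by hypothesis, so reductivity of the $G_i$ is automatic and does not need to be spread out.
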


\begin{proof}
There exists $i_0 \in I$ such that $X_{i_0} \to S_{i_0}$ is separated and faithfully flat (\cite[Section~Tag~084V]{Stacks}). For $i \geq i_0$ let $C_i \subseteq S_i$ be the set of $s \in S_i$ such that $(X_i)_s$ is geometrically normal and geometrically integral. Then $C_i$ is constructible in $S_i$ by Lemma~\ref{GeomConst} below.

For $j \geq i \geq i_0$ one has $C_j = \pi_{ij}^{-1}(C_i)$ and the inverse image of $C_i$ in $S$ is equal to $S$ because $X$ has geometrically normal and geometrically integral fibers. By \cite[10.57]{GW} the set of (open) constructible subsets in $S$ is the filtered colimit of the sets of (open) constructible subsets of $S_i$. Hence there exists a $i_1 \geq i_0$ such that $C_{i_1} = S_{i_1}$, i.e., $X_{i_1}$ has geometrically normal and geometrically integral fibers.

Now we can use the same argument with the spherical locus $U_i \subseteq S_i$ of $X_i$ for $i \geq i_1$ as given by Corollary~\ref{SphericityOpen}. We find $i_2 \geq i_1$ such that $X_{i_2}$ is $G_{i_2}$-spherical.
\end{proof}

\begin{lemma}\label{GeomConst}
Let $S$ be a scheme and let $X \to S$ be an algebraic space of finite presentation over $S$. Let $C(X)$ be the set of $s \in S$ such that $X_{\sbar}$ is geometrically normal (resp.~geometrically integral). Then $C(X)$ is a constructible subset of $S$.
\end{lemma}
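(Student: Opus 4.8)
The overall plan is to transfer the known constructibility statements for morphisms of schemes (\cite{EGA4}) to the algebraic space $X$ by means of an \'etale presentation. For normality and reducedness this works immediately, since both properties are \'etale-local on the source; the delicate point — and, I expect, the only real obstacle — is geometric irreducibility of the fibers, which does \emph{not} survive an \'etale presentation and will have to be treated by Noetherian induction together with a spreading-out argument.

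\emph{Reduction to $S$ affine and Noetherian.} Constructibility is Zariski-local on $S$, so we may assume $S$ affine; then $X$ is quasi-compact and quasi-separated. Write $S=\lim_\lambda S_\lambda$ as a cofiltered limit of affine schemes of finite type over $\mathbf{Z}$ with affine transition morphisms. By approximation of algebraic spaces of finite presentation (\cite[Tag~07SK]{Stacks}) there is a $\lambda$ and an algebraic space $X_\lambda\to S_\lambda$ of finite presentation with $X\cong X_\lambda\times_{S_\lambda}S$. Since ``the geometric fiber is normal'' (resp.\ ``\ldots\ integral'') is a property of the fiber that is insensitive to field extensions of the residue field, a point $s\in S$ lies in $C(X)$ if and only if its image in $S_\lambda$ lies in $C(X_\lambda)$; hence $C(X)$ is the preimage of $C(X_\lambda)$ under $S\to S_\lambda$, and preimages of constructible subsets under morphisms of schemes are constructible. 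So it suffices to prove the lemma for $S_\lambda$, i.e.\ we may and do assume $S$ Noetherian.

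\emph{The \'etale presentation; the normal and reduced cases.} Choose a surjective \'etale morphism $p\colon\Xtilde\to X$ with $\Xtilde$ an affine scheme; then $\Xtilde$ is Noetherian, $p$ is of finite presentation and separated (the latter because $\Xtilde\to S$ is separated and $X\to S$ is quasi-separated), and for every $s\in S$ the base change $\Xtilde_{\sbar}\to X_{\sbar}$ is an \'etale presentation of the algebraic space $X_{\sbar}$. As normality and reducedness of a locally Noetherian algebraic space can be read off from any \'etale presentation (\cite{Stacks}), $X_{\sbar}$ is normal (resp.\ reduced) if and only if $\Xtilde_{\sbar}$ is. Hence in the ``normal'' case $C(X)$ is the set of $s\in S$ such that $\Xtilde_s$ is geometrically normal over $\kappa(s)$, which is constructible by the corresponding statement for the morphism of schemes $\Xtilde\to S$ (apply \cite[(12.1.1)]{EGA4} to the non-normal-fiber locus in $\Xtilde$, then Chevalley's theorem for its image in $S$). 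In the ``integral'' case write integrality as reducedness together with irreducibility, so that $C(X)$ is the intersection of the set of $s$ with $\Xtilde_s$ geometrically reduced (constructible by \cite[(9.9.2)]{EGA4}) with the set $E$ of $s\in S$ such that $X_{\sbar}$ is irreducible. Thus it remains to prove that $E$ is constructible.

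\emph{The irreducible case.} Here I would use the standard criterion that a subset of a Noetherian scheme is constructible once, for every irreducible closed $Z\subseteq S$ with generic point $\eta$, the intersection with $Z$ either contains a non-empty open subset of $Z$ or is not dense in $Z$; replacing $S$ by $Z$ with its reduced structure, we may assume $S$ integral and must produce a non-empty open $V\subseteq S$ on which membership in $E$ is constant. For this, introduce the \'etale equivalence relation $R:=\Xtilde\times_X\Xtilde$, which (since $p$ is separated and $X$ quasi-separated) is a quasi-compact separated scheme over $S$ with both projections to $\Xtilde$ \'etale; for each $s$ the map $|\Xtilde_{\sbar}|\to|X_{\sbar}|$ is open and surjective and exhibits $|X_{\sbar}|$ as the quotient of $|\Xtilde_{\sbar}|$ by $|R_{\sbar}|$, so the irreducible components of $X_{\sbar}$ are the images of the $R_{\sbar}$-orbits of the irreducible components of $\Xtilde_{\sbar}$, and $X_{\sbar}$ is irreducible precisely when those components form a single $R_{\sbar}$-orbit. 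Now apply the spreading-out results of \cite[\S9.7]{EGA4} to the scheme $\Xtilde\to S$ to shrink $S$ so that $\Xtilde$ acquires a fixed number $m$ of relative irreducible components, each dominating $S$ with geometrically irreducible fibers and together accounting for all irreducible components of every geometric fiber; then, for each of the finitely many pairs $(i,j)$, a further routine spreading-out (applied to $R$ and to the schemes $\Xtilde$) makes the condition ``$R_{\sbar}$ links the $i$-th and $j$-th component'' either hold in every geometric fiber or in none. Over the resulting non-empty open the partition of $\{1,\dots,m\}$ into $R_{\sbar}$-orbits, hence the truth value of ``$X_{\sbar}$ is irreducible'', no longer depends on $s$, which is what was needed. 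I expect this last step — controlling geometric irreducibility of the fibers through the combinatorics of $R$ — to be the only real obstacle: normality and reducedness descend to scheme-theoretic statements about $\Xtilde$ with essentially no work, whereas irreducibility of the fibers is visible only after understanding how the \'etale equivalence relation glues the components, so one genuinely has to run the Noetherian-induction/spreading-out argument above.
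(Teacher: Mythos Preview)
Your treatment of the ``geometrically normal'' case is essentially the paper's: pull back along an \'etale presentation $\Xtilde\to X$, use that normality is \'etale-local, and invoke the EGA result for schemes (the paper cites \cite[(9.9.5)]{EGA4} directly rather than going via (12.1.1) and Chevalley, but this is cosmetic).

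For ``geometrically integral'' the paper does something different and much shorter: it simply cites Romagny \cite{Rom_Comp}, who proves exactly this constructibility statement for algebraic spaces (indeed for stacks). Your route---split off reducedness via the presentation, then handle irreducibility by Noetherian induction, tracking how the \'etale equivalence relation $R=\Xtilde\times_X\Xtilde$ glues the geometric irreducible components of $\Xtilde_{\sbar}$ and spreading out the resulting combinatorics---is a genuine alternative: it is more self-contained (no black box) and is in spirit what \cite{Rom_Comp} does. One technical point to tighten: the step ``shrink $S$ so that $\Xtilde$ acquires $m$ relative irreducible components, each with geometrically irreducible fibers'' is not achievable by shrinking alone---the irreducible components of $\Xtilde$ over an integral $S$ need not have geometrically irreducible generic fibers. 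You first need a finite surjective base change $S'\to S$ (e.g.\ splitting the geometric components at the generic point), then shrink $S'$; since the property ``$X_{\sbar}$ irreducible'' depends only on the geometric fiber, your set $E$ pulls back to $E'$ and constructibility of $E'$ gives constructibility of $E$ via Chevalley. With that fix your argument goes through.
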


\begin{proof}
Consider the property ``geometrically normal''. Let $\Xtilde \to X$ be an \'etale surjective morphism where $\Xtilde$ is a scheme, then $C(X) = C(\Xtilde)$. Hence $C(X)$ is constructible in $S$ by \cite[(9.9.5)]{EGA4}.

The constructibility for the property ``geometrically integral'' is shown in \cite{Rom_Comp}.
\end{proof}


\section{Closedness of sphericity for subgroups}

We now consider the special case where $X$ is a quotient of $G$. Let $S$ be a scheme, let $G$ be a reductive group scheme over $S$, and let $H$ be a closed subgroup scheme of $G$ that is flat and of finite presentation over $S$. Define the \emph{spherical locus}
\[
\Sph_G(H) := \sett{s \in S}{$G_{\sbar}/H_{\sbar}$ is $G_{\sbar}$-spherical}.
\]
We call $H$ a \emph{spherical subgroup of $G$} if $\Sph_G(H) = S$.

\begin{proposition}\label{CharSphericalSubgroups}
In the situation above the following conditions are equivalent.
\begin{equivlist}
\item\label{CharSphericalSubgroups1}
The subgroup scheme $H$ is a spherical subgroup of $G$.
\item\label{CharSphericalSubgroups2}
The algebraic space $G/H$ is $G$-spherical.
\item\label{CharSphericalSubgroups3}
The conjugation action of $H$ on the scheme $\Bor(G)$ of Borel subgroups of $G$ has in all geometric fibers a dense orbit.
\end{equivlist}
\end{proposition}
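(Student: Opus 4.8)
The plan is to prove the three equivalences by a short cycle. The implication \ref{CharSphericalSubgroups1} $\Leftrightarrow$ \ref{CharSphericalSubgroups2} is essentially a matter of unwinding definitions: by Definition~\ref{DefSphericalSpace}, $G/H$ is $G$-spherical if and only if it is flat, separated, of finite presentation over $S$ and every geometric fiber $(G/H)_{\sbar}$ is a spherical $G_{\sbar}$-variety. Since $H$ is a flat closed subgroup scheme of finite presentation in a smooth affine $G$, the quotient $G/H$ exists as an algebraic space, is flat, separated, and of finite presentation over $S$, and its formation commutes with base change; in particular $(G/H)_{\sbar} = G_{\sbar}/H_{\sbar}$. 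Thus $G/H$ is $G$-spherical exactly when $G_{\sbar}/H_{\sbar}$ is $G_{\sbar}$-spherical for all $s$, which is the condition $\Sph_G(H) = S$ defining a spherical subgroup. (One should note that $G_{\sbar}/H_{\sbar}$ is automatically normal and connected, being a homogeneous space under the connected smooth group $G_{\sbar}$, so there is nothing to check on that side.)

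For \ref{CharSphericalSubgroups1} $\Leftrightarrow$ \ref{CharSphericalSubgroups3} the work reduces immediately to a single geometric fiber: both conditions are stated fiberwise over geometric points, so it suffices to prove, for a reductive group $\Gamma = G_{\sbar}$ over an algebraically closed field and a subgroup $\Lambda = H_{\sbar}$, that $\Gamma/\Lambda$ is $\Gamma$-spherical if and only if the conjugation action of $\Lambda$ on $\Bor(\Gamma)$ has a dense orbit. The key point is the standard duality between $B$-orbits on $\Gamma/\Lambda$ and $\Lambda$-orbits on $\Gamma/B$: for a Borel $B$ of $\Gamma$, the double coset space $B\backslash \Gamma / \Lambda$ parametrizes both, and $B$ has a dense orbit on $\Gamma/\Lambda$ if and only if some (equivalently every) $\Lambda$-orbit on $\Gamma/B = \Bor(\Gamma)$ is dense — this is just the observation that $\dim B\cdot x = \dim B + \dim \Lambda - \dim(\text{stabilizer})$ and the dimension count is symmetric in $B$ and $\Lambda$. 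Concretely, $\Gamma/\Lambda$ has an open $B$-orbit iff $\dim \Stab_B(e\Lambda) = \dim B - \dim(\Gamma/\Lambda) = \dim B + \dim \Lambda - \dim \Gamma$, and $\Bor(\Gamma)$ has an open $\Lambda$-orbit iff $\dim \Stab_\Lambda(B) = \dim \Lambda - \dim(\Gamma/B) = \dim \Lambda - \dim \Gamma + \dim B$; but $\Stab_B(e\Lambda) = B \cap \Lambda$ inside $\Gamma$ and $\Stab_\Lambda(B) = \Lambda \cap B$, so the two stabilizers literally coincide and the two conditions are identical. This is exactly condition~\ref{Sphii} in the definition of sphericity applied to $X = \Gamma/\Lambda$ with $x = e\Lambda$.

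The main (and really only) obstacle is bookkeeping about the base scheme: one must check that $G/H$ exists as an algebraic space and has the required flatness and finiteness properties, and that all the fiberwise statements are genuinely equivalent to the "for all geometric points" formulations. The existence of $G/H$ as a separated, flat, finitely presented algebraic space over $S$ with base-change-compatible formation is standard for a flat finitely presented closed subgroup scheme of a smooth affine group scheme (via fppf descent of the quotient sheaf and Artin's representability, or directly from the literature on homogeneous spaces for group schemes). Everything else is a dimension count in a single geometric fiber together with the already-recorded fact (Corollary~\ref{AlgClosedExt}) that sphericity is insensitive to algebraically closed field extensions, so no passage to $\bar s$ is lost. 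I would therefore organize the proof as: (i) $\ref{CharSphericalSubgroups1}\Leftrightarrow\ref{CharSphericalSubgroups2}$ by definition-chasing and the properties of $G/H$; (ii) reduce $\ref{CharSphericalSubgroups1}\Leftrightarrow\ref{CharSphericalSubgroups3}$ to a geometric fiber; (iii) prove the fiberwise equivalence by the stabilizer identity $B\cap\Lambda = \Lambda\cap B$ and the matching dimension formulas, invoking condition~\ref{Sphii}.
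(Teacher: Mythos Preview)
Your proposal is correct and follows essentially the same route as the paper's proof: the equivalence \ref{CharSphericalSubgroups1}$\Leftrightarrow$\ref{CharSphericalSubgroups2} is reduced to the base-change compatibility of $G/H$ together with the flat/separated/finitely-presented properties of the quotient (which the paper defers to its appendix), and \ref{CharSphericalSubgroups1}$\Leftrightarrow$\ref{CharSphericalSubgroups3} is checked fiberwise via the duality between $B$-orbits on $G_{\sbar}/H_{\sbar}$ and $H_{\sbar}$-orbits on $G_{\sbar}/B\cong\Bor(G_{\sbar})$. The paper states this duality in one line without justification; your stabilizer identity $\Stab_B(eH)=B\cap H=H\cap N_G(B)=\Stab_H(B)$ (using $N_G(B)=B$) together with the dimension count is exactly the standard argument one would supply to fill that line in, so the two proofs coincide.
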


Recall that $\Bor(G)$ is the smooth projective $S$-scheme of finite presentation with geometrically integral fibers whose $T$-valued points for some $S$-scheme $T$ is the set of Borel subgroups $G_T$ (\cite[Exp. XXII~5.8.3]{SGA3}). 

\begin{proof}
For all $s \in S$ some Borel subgroup $B$ of $G_{\sbar}$ acts on $G_{\sbar}/H_{\sbar}$ with open dense orbit if and only if $H_{\sbar}$ acts with open dense orbit on $G_{\sbar}/B \cong \Bor(G_{\sbar})$. This shows the equivalence of \ref{CharSphericalSubgroups1} and \ref{CharSphericalSubgroups3}. As the formation of the quotient $G/H$ commutes with arbitrary base change and in particular with passage to geometric fibers, it is clear that \ref{CharSphericalSubgroups2} implies \ref{CharSphericalSubgroups1}.

The converse follows from facts about quotients recalled in the appendix.
\end{proof}

\begin{remark}\label{NonSmoothSpherical}
If $S$ is of characteristic $0$ (i.e., a scheme over $\Spec \QQ$), then every flat group scheme locally if finite presentation over $S$ is automatically smooth. But in general a spherical subgroup as defined above is not necessarily smooth over the base scheme. For instance let $S$ be the spectrum of an algebraically closed field of characteristic $p > 0$ and let $G = \GG_{m,S}$. Then the scheme $\mu_p$ of $p$-th roots of unity is a spherical subgroup of $G$ in the above sense.
\end{remark}

\begin{theorem}\label{SphericalOpenClosed}
$\Sph_G(H)$ is open and closed in $S$.
\end{theorem}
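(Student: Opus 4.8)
The plan is to reduce to the case already handled by Knop and Röhrle and then use a limit argument together with constructibility. First, openness: by Corollary~\ref{SphericityOpen} applied to $X = G/H$ (which is separated, flat, of finite presentation, with geometrically normal and geometrically integral fibers by the facts on quotients recalled in the appendix, and using Proposition~\ref{CharSphericalSubgroups} to identify $\Sph_G(H)$ with the spherical locus of $G/H$), the set $\Sph_G(H)$ is open and constructible in $S$. So it remains to prove that $\Sph_G(H)$ is closed, equivalently that its complement is also open, equivalently — since $\Sph_G(H)$ is constructible — that $\Sph_G(H)$ is stable under generization. (A constructible set is closed if and only if it is stable under specialization; being open it is already stable under generization, so closedness is equivalent to $S \setminus \Sph_G(H)$ being stable under generization, i.e. to $\Sph_G(H)$ being stable under specialization.)

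To check stability under specialization it suffices to treat the case where $S = \Spec(R)$ for $R$ a discrete valuation ring, with closed point $s_0$ and generic point $\eta$, and to show: if $G_\eta/H_\eta$ is $G_\eta$-spherical then $G_{s_0}/H_{s_0}$ is $G_{s_0}$-spherical. Here I would use Corollary~\ref{AlgClosedExt} to pass freely between a field and its algebraic closure, so the question is purely about the special fiber over $\overline{\kappa(s_0)}$ given sphericity of the generic fiber. The standard way to import the Knop–Röhrle result is to approximate: by the usual limit formalism (the situation over $R$ descends to a subring of $R$ of finite type over $\ZZ$, or over the prime field, localized appropriately — a Dedekind domain after shrinking) we may assume $R$ is a Dedekind domain essentially of finite type over $\ZZ$ or over a field, and after an étale localization we may assume $G$ is split, so $\Bor(G)$ has a section and $B \subseteq G$ exists. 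Then \cite[Theorem~3.4]{KnRo} applies (its hypotheses being precisely: Dedekind base, split reductive $G$) and gives that the spherical locus of $H$ is open and closed over this Dedekind scheme; in particular it is stable under specialization, which is what we want.

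The one point requiring care — and what I expect to be the main obstacle — is making the reduction to the Knop–Röhrle hypotheses legitimate, i.e. checking that "$H$ spherical in $G$" genuinely descends and ascends along the approximation, and that one really lands on a \emph{Dedekind} base. For this I would invoke Corollary~\ref{SphericalInductive} (sphericity of $G/H$ is compatible with filtered limits of qcqs schemes), Remark~\ref{Approx} (the pair $(G,H)$ itself descends to some finite level), Lemma~\ref{GeomConst} (geometric normality and integrality of the fibers of $G/H$ is a constructible condition, so it holds at a finite level), and \cite[10.57]{GW} (constructible/open loci form a filtered colimit), exactly as in the proof of Corollary~\ref{SphericalInductive}. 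Concretely: a specialization in $S$ is the image of a map $\Spec R \to S$ from a DVR; pulling back $G$ and $H$ we get a reductive group and a flat finitely presented closed subgroup over $R$; this descends to a finite type $\ZZ$-subalgebra $A \subseteq R$, and after inverting finitely many elements of $A$ and replacing it by the normalization we may assume $A$ is regular of dimension $\le 1$, hence a Dedekind domain (or a field, in which case there is nothing to prove), with $\Spec R \to \Spec A$ hitting both a closed point and the generic point of a curve through it; an étale localization splits $G$. Now Knop–Röhrle gives openness and closedness of $\Sph_G(H)$ over $\Spec A$, hence stability under the specialization coming from $\Spec R$, and pushing this back up we conclude that the original specialization lies in $\Sph_G(H)$. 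Combined with the openness from Corollary~\ref{SphericityOpen}, this shows $\Sph_G(H)$ is open and closed in $S$.
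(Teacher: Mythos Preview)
Your strategy is the paper's: openness and constructibility from the sphericity criterion, then closedness via stability under specialization, reduction to a DVR, and Knop--R\"ohrle. (The paper gets openness by applying Theorem~\ref{SphericalConst} to the $H$-action on $\Bor(G)$ rather than Corollary~\ref{SphericityOpen} to $G/H$, but its footnote explicitly records your route as an alternative.)

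Two execution points need repair, and the paper's ordering avoids both. First, the assertion that every specialization in $S$ is realized by a map from the spectrum of a \emph{discrete} valuation ring is not valid for arbitrary $S$; one only gets a valuation ring in general. The paper therefore approximates \emph{before} this step: after passing fpqc-locally to split $G$ and to affine $S$, it writes $S$ as a filtered limit of schemes of finite type over $\ZZ$ and uses Corollary~\ref{SphericalInductive} to descend to a noetherian base, where the DVR reduction (\cite[15.7]{GW}) and the implication ``constructible and specialization-stable $\Rightarrow$ closed'' are both legitimate. Second, once you are over a DVR with $G$ split, Knop--R\"ohrle applies immediately: a DVR \emph{is} a Dedekind domain. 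Your further descent to a finite-type $\ZZ$-subalgebra $A\subseteq R$ followed by ``normalize to get a Dedekind domain'' is superfluous and, as written, incorrect---the normalization of an integral finite-type $\ZZ$-algebra is normal and noetherian but can have dimension $>1$, so it need not be Dedekind. Drop that paragraph (or, if you insist on approximating, do it before extracting the DVR, as the paper does).
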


In particular we see that if $S$ is connected and there exists a point $t \in S$ such that $H_{\tbar}$ is a spherical subgroup of $G_{\tbar}$, then $H_{\sbar}$ is a spherical subgroup of $G_{\sbar}$ for all $s \in S$. The theorem generalizes a result of Knop and R\"ohrle (\cite[Theorem~3.4]{KnRo}) who considered the case that $S$ is the spectrum of a Dedekind domain and that $G$ is split (i.e., $G$ has a split maximal torus $T$). In fact, we will reduce to this case using Corollary~\ref{SphericityOpen}.

\begin{proof}
To prove Theorem~\ref{SphericalOpenClosed} we can work locally for the fpqc topology on $S$ by Remark~\ref{PermSpherical}~\ref{PermSpherical2}. Hence we may assume that $G$ is a split reductive group scheme.

We consider the action of $H$ by conjugation on the scheme of Borel subgroups $\Bor(G)$, a smooth and projective scheme over $S$. Then Theorem \ref{SphericalConst} shows that $\Sph_G(H)$ is open and constructible in $S$\footnote{One could have chosen a Borel subgroup of $G$ (possible because $G$ is split) and then apply Theorem \ref{SphericalConst} to the $B$-space $G/H$.}.

It remains to prove that $\Sph_G(H)$ is closed. We may assume that $S = \Spec A$ is affine. By writing $A$ as a filtered inductive limit of finitely generated $\ZZ$-algebras, we may assume by Corollary~\ref{SphericalInductive} that $A$ is of finite type over $\ZZ$ in particular noetherian. Moreover, as we already know that $\Sph_G(H)$ is constructible, it suffices to show that $\Sph_G(H)$ is stable under specialization (see e.g. \cite[10.17]{GW}).

Let $x,y \in S$ be two points such that $x$ is a specialization of $y$. By \cite[15.7]{GW} there exists a discrete valuation ring and a morphism of schemes $g\colon \Spec R \to S$ such that $g(s) = x$ and $g(\eta) = y$, where $s$ (resp.~$\eta$) is the special (resp.~generic) point of $\Spec R$. Hence base change to $\Spec R$ allows us to reduce to the case that $S$ is the spectrum of a discrete valuation ring and that $x$ is the special point and $y$ is the generic point. This is the case considered in Theorem~3.4 of \cite{KnRo}, which shows that in this case $x \in \Sph_G(H)$ if and only if $y \in \Sph_G(H)$.
\end{proof}


\section{Spherical embeddings}

Let $S$ be a scheme and let $G$ be a smooth group scheme of finite presentation over $S$. 

\begin{definition}\label{DefHomog}
An algebraic space $X$ with $G$-action is called a \emph{homogeneous} if $X$ is faithfully flat, of finite presentation, and separated over $S$ with geometrically reduced fibers and if $G(\kappa(\sbar))$ acts transitively on $X(\kappa(\sbar))$ for every geometric point $\sbar$ of $S$. 
\end{definition}

Using the same arguments as in Remark~\ref{PermSpherical} one sees that if $S' \to S$ is a morphism of schemes and $X$ is a homogeneous $G$-space, then $X_{S'}$ is a homogeneous $G_{S'}$-space. The converse holds if $S' \to S$ is a fpqc covering.

\begin{lemma}\label{HomogLocallyQuot}
A $G$-space $X$ over $S$ is homogeneous if and only if there exists an fppf-covering $S' \to S$, a closed subgroup scheme $H \subseteq G_{S'}$ flat and of finite presentation over $S'$, and a $G_{S'}$-equivariant isomorphism $G_{S'}/H \iso X_{S'}$ of algebraic spaces over $S'$.
\end{lemma}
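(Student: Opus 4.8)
The plan is to prove the two implications separately. The "if" direction is the easy one: descent. Suppose there exist an fppf-covering $S' \to S$, a flat finitely presented closed subgroup scheme $H \subseteq G_{S'}$, and a $G_{S'}$-equivariant isomorphism $G_{S'}/H \iso X_{S'}$. I would first observe that $G_{S'}/H$ is homogeneous over $S'$: it is faithfully flat, of finite presentation, and separated over $S'$ by the standard facts about quotients recalled in the appendix (used already in the proof of Proposition~\ref{CharSphericalSubgroups}), its fibers are smooth (hence geometrically reduced) by Lemma~\ref{MinimalSubspaceQuot} applied fiberwise, and transitivity of $G(\kappa(\sbar))$ on $(G_{S'}/H)(\kappa(\sbar))$ is clear since $\kappa(\sbar)$ is algebraically closed. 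Hence $X_{S'}$ is homogeneous over $S'$, and since $S' \to S$ is an fppf-covering and all the defining properties of "homogeneous" (faithfully flat, finitely presented, separated, geometrically reduced fibers, pointwise transitivity on geometric points) are fpqc-local on the base, $X$ is homogeneous over $S$ by the remark following Definition~\ref{DefHomog}.

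For the "only if" direction, suppose $X$ is a homogeneous $G$-space. I want to produce an fppf-covering $S' \to S$ over which $X$ acquires a section. Once $X_{S'} \to S'$ has a section $\sigma$, it is a $G_{S'}$-space with a point, so the orbit morphism $G_{S'} \to X_{S'}$, $g \mapsto g\cdot\sigma$, is defined, and the key claim is that it is faithfully flat of finite presentation; then $H := \Stab_{G_{S'}}(\sigma)$ is a closed subgroup scheme of $G_{S'}$, flat and of finite presentation over $S'$, and the induced map $G_{S'}/H \to X_{S'}$ is a monomorphism and an epimorphism of fppf-sheaves, hence an isomorphism — exactly the argument used in the proof of Lemma~\ref{MinimalSubspaceQuot}. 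To produce the section: $X \to S$ is faithfully flat and of finite presentation, hence an fppf-covering of $S$ itself if $X$ is a scheme; in general, choosing an étale surjection $\Xtilde \to X$ with $\Xtilde$ a scheme, the composite $\Xtilde \to S$ is faithfully flat (flat $\circ$ faithfully flat) and locally of finite presentation, and it is quasi-compact after replacing $S$ by an affine open and $\Xtilde$ by a quasi-compact open still surjecting onto $X$; so $S' := \Xtilde$ (glued from such affine pieces, or simply worked out Zariski-locally on $S$) is an fppf-covering of $S$ through which $X$ acquires a tautological section.

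The technical heart is the faithful flatness and finite presentation of the orbit morphism $f\colon G_{S'} \to X_{S'}$. Finite presentation is formal since both sides are of finite presentation over $S'$ and $X_{S'}$ is separated. For flatness I would argue fiberwise over $S'$ and use the fibral flatness criterion (\cite[Tag~039A]{Stacks} or its algebraic-space analogue): $G_{S'}$ and $X_{S'}$ are flat of finite presentation over $S'$, so it suffices to check that $f_{\sbar}\colon G_{\sbar} \to X_{\sbar}$ is flat for every geometric point $\sbar$ of $S'$. But over an algebraically closed field, $X_{\sbar}$ has a rational point by the transitivity hypothesis in Definition~\ref{DefHomog}, and $X_{\sbar}$ is reduced (geometrically reduced fibers); the orbit map $G_{\sbar} \to X_{\sbar}$ is $G_{\sbar}(\kappa(\sbar))$-surjective by transitivity, hence dominant, and now the argument of Lemma~\ref{MinimalSubspaceQuot} applies verbatim: it is generically flat because $X_{\sbar}$ is reduced (\cite[Tag~06QS]{Stacks}), hence flat everywhere by homogeneity (translating by $G_{\sbar}$), hence open and surjective. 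Surjectivity of $f_{\sbar}$ together with faithful flatness over each geometric fiber gives, back over $S'$, that $f$ is surjective, hence faithfully flat. Then $G_{S'}/H \to X_{S'}$ is faithfully flat and a monomorphism, so an isomorphism. The only real subtlety — and the main obstacle — is keeping track of the algebraic-space formalism (forming the quotient $G_{S'}/H$ as an algebraic space, checking the orbit morphism is representable, and invoking the fibral flatness criterion in the algebraic-space setting), but all of this is covered by the Stacks Project references already in use in the paper, and by passing through an étale atlas $\Xtilde \to X$ as done repeatedly in the proof of Theorem~\ref{SphericalConst}.
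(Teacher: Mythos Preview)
Your proposal is correct and follows essentially the same approach as the paper: reduce by an fppf-local section, show the orbit morphism is faithfully flat via the fibral flatness criterion (generic flatness on reduced geometric fibers plus homogeneity), and conclude $G_{S'}/H \iso X_{S'}$ as fppf-sheaves. The only notable difference is that the paper spells out why $H$ is flat and of finite presentation via a separate cartesian-square descent argument along $G \to S$, whereas you leave this implicit; your implicit reasoning is in fact valid (and slightly quicker), since $H$ is the fiber of the orbit morphism over the closed section $\sigma$, hence inherits flatness and finite presentation by base change once you know the orbit morphism has these properties.
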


The proof will show that a homogeneous $G$-space $X$ is of the form $G/H$ if and only if $X$ has a section over $S$.

\begin{proof}
If $X \cong G/H$ for a closed subgroup scheme $H$ of $G$, then $X$ is clearly a homogeneous $G$-space by the facts recalled in the appendix. Hence the sufficiency follows from descent.

Conversely, let $X$ be a homogeneous $G$-space. As $X$ is faithfully flat and of finite presentation over $S$, it admits locally for the fppf topology a section. Hence, we may assume that $X$ has a section $x\colon S \to X$. Let $H$ be the stabilizer of this section. By \cite[II, \S1, Th\'eor\`eme 3.6]{DeGa}, $H$ is a closed subgroup scheme of $G$. Let $\pi\colon G \to X$, $g \sends g\cdot x$ be the orbit morphism corresponding to $x$. Then $\pi$ is surjective, because it is surjective by hypothesis on geometric fibers. As all geometric fibers of $X$ are reduced, $\pi$ is generically flat on geometric fibers. By homogeneity it is faithfully flat on geometric fibers and hence $\pi$ itself is flat by the fiber criterion of flatness. Hence $\pi$ is an epimorphism for the fppf topology and therefore induces a $G$-equivariant isomorphism $G/H \iso X$.

It remains to show that $H$ is flat and of finite presentation over $S$. Consider the cartesian diagram
\[\xymatrix{
G \times_S H \ar[r]^-a \ar[d]_p & G \ar[d]^{\pi} \\
G \ar[r]^{\pi} & X,
}\]
where $a$ is the action of $H$ on $G$ by right multiplication and where $p$ is the first projection. Now $\pi$ is of finite presentation over $S$ because $G$ and $X$ are both of finite presentation over $S$ and flat. Hence $p$ is flat and of finite presentation. Therefore $H \to S$ is flat and of finite presentation by fpqc-descent via the quasi-compact faithfully flat morphism $G \to S$.
\end{proof}

\begin{remark}\label{HomogSmooth}
As $G$ is smooth over $S$, any quotient $G/H$ as above is smooth (see Appendix). As smoothness can be checked fppf-locally, one sees that a homogeneous $G$-space is automatically smooth over $S$.
\end{remark}

Over fields every homogeneous space is automatically a scheme:

\begin{lemma}\label{HomogeneousScheme}
Let $S = \Spec(k)$ for a field $k$. Then every homogenous $G$-space $X$ is a smooth quasi-projective scheme over $k$.
\end{lemma}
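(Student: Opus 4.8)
The plan is to reduce to the quasi-projectivity of quotients $G_{k'}/H$ over a field — recalled in the appendix — by exhibiting a rational point of $X$ after a finite Galois extension, and then to descend an ample invertible sheaf along that extension.

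Smoothness is immediate: since $G$ is smooth over $S$, every quotient $G_{S'}/H$ is smooth over $S'$, and smoothness is fppf-local on the source, so any homogeneous $G$-space is smooth over $S$ (Remark~\ref{HomogSmooth}). Also $X$ is non-empty, being faithfully flat over $\Spec k$. I first claim $X$ has a point over a finite separable extension of $k$: picking an \'etale surjection $\widetilde X\to X$ with $\widetilde X$ a scheme, the scheme $\widetilde X$ is smooth and non-empty over $k$, hence $\widetilde X_{k^{\mathrm{sep}}}$ is smooth and non-empty over the separably closed field $k^{\mathrm{sep}}$ and so has a $k^{\mathrm{sep}}$-rational point (a smooth scheme over a separably closed field is Zariski-locally \'etale over an affine space, whose rational points are dense); this point already lives over a finite subextension, so $X(k')\neq\emptyset$ for some finite separable $k'/k$, which we may enlarge so that $k'/k$ is Galois with group $\Gamma$.

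Over $k'$ the homogeneous $G_{k'}$-space $X_{k'}$ then has a section, so by the remark following Lemma~\ref{HomogLocallyQuot} it is $G_{k'}$-equivariantly isomorphic to $G_{k'}/H$ for a flat, finitely presented closed subgroup scheme $H\subseteq G_{k'}$; by the appendix this is a smooth quasi-projective $k'$-scheme, and in particular $X_{k'}$ carries a $k'$-ample invertible sheaf $\mathcal L$. Letting $\sigma\in\Gamma$ denote also the induced $k$-automorphism of the scheme $X_{k'}$, each $\sigma^{\ast}\mathcal L$ is again $k'$-ample, hence so is $\mathcal M:=\bigotimes_{\sigma\in\Gamma}\sigma^{\ast}\mathcal L$, and the permutation of the tensor factors equips $\mathcal M$ with a canonical $\Gamma$-linearization (equivalently, $\mathcal M$ is the pullback of $\mathrm{Nm}_{X_{k'}/X}(\mathcal L)$ along the finite locally free map $X_{k'}\to X$). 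Thus $\mathcal M$ descends to an invertible sheaf $\overline{\mathcal M}$ on $X$, which is ample relative to $k$ because ampleness is fpqc-local on the base and $\overline{\mathcal M}$ pulls back to $\mathcal M$. Since a quasi-compact, quasi-separated algebraic space carrying an ample invertible sheaf is a scheme, and $X$ is separated and of finite type over $k$, it follows that $X$ is quasi-projective over $k$; together with the first step this gives the lemma.

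The one genuinely non-formal input is the appendix's quasi-projectivity of $G_{k'}/H$ (a Chevalley-type equivariant embedding into a projective space, which is what actually produces global sections of a line bundle); everything else — the separable rational point, the Galois descent of $\mathcal M$, and the implication ``ample invertible sheaf $\Rightarrow$ quasi-projective scheme'' — is routine. If one wishes to avoid the Galois closure, one may work with an arbitrary finite extension $k'/k$ realizing a rational point and note that the norm of $\mathcal L$ along $X_{k'}\to X$ becomes ample after pulling back to the Galois closure of $k'/k$, using that ampleness descends along finite surjective morphisms.
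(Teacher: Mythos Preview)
Your proof is correct but follows a different route from the paper. The paper's argument passes all the way to an algebraic closure $\bar k$, observes that $X_{\bar k}\cong G_{\bar k}/H$ is quasi-projective, and then invokes the criterion \cite[Tag~0B88]{Stacks}: since any finite set of $\bar k$-points of the quasi-projective scheme $X_{\bar k}$ lies in an affine open, $X$ itself is a scheme; quasi-projectivity then follows by fpqc descent along $k\hookrightarrow\bar k$. Your approach instead stays at a finite Galois level, norms down an ample invertible sheaf, and extracts both ``scheme'' and ``quasi-projective'' in one stroke from the existence of $\overline{\mathcal M}$. Each approach has its merits: the paper's is a two-line appeal to a black box, while yours is more self-contained and makes the descent of quasi-projectivity explicit (something the paper only asserts).

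Two small comments. First, the appendix as written only asserts that $G/H$ is a \emph{scheme} over a base of dimension $\leq 1$ (via Anantharaman); it does not actually state quasi-projectivity. You correctly identify this Chevalley-type input as the one non-formal ingredient, but your phrase ``by the appendix this is a smooth quasi-projective $k'$-scheme'' slightly overcredits the appendix. Second, the sentence ``a quasi-compact, quasi-separated algebraic space carrying an ample invertible sheaf is a scheme'' is doing real work and deserves to be unpacked, since ``ample'' on an algebraic space is not entirely standard. The cleanest way to phrase your final step is to note that for large $n$ the sections $H^0(X,\overline{\mathcal M}^{\otimes n})$ globally generate $\overline{\mathcal M}^{\otimes n}$ (this is checked after base change to $k'$, using flat base change for $H^0$), giving a morphism of algebraic spaces $X\to\mathbb P^N_k$ whose base change to $k'$ is an immersion; since being an immersion is fpqc-local on the target, $X\hookrightarrow\mathbb P^N_k$ is an immersion and $X$ is a quasi-projective scheme.
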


\begin{proof}
Let $\kbar$ be an algebraic closure of $k$. Then $X_{\kbar} \cong G_{\kbar}/H$ by Lemma~\ref{HomogLocallyQuot} for some algebraic subgroup $H$ of $G_{\kbar}$. In particular $X_{\kbar}$ is a smooth quasi-projective scheme over $\kbar$. As $X_{\kbar}$ is quasi-projective, any finite set of $\kbar$-valued points is contained in an open affine subscheme. Hence $X$ is a scheme by \cite[Tag~0B88]{Stacks}. It is smooth and quasi-projective by fpqc descent for field extensions.
\end{proof}

\begin{definition}\label{DefSphericalEmb}
Let $S$ be a scheme, let $G$ be a reductive group scheme over $S$ and let $X^0$ be a spherical homogeneous $G$-space. Then a \emph{spherical embedding of $X^0$} is a spherical $G$-space $X$ over $S$ together with an open $G$-equivariant immersion $i\colon X^0 \to X$.

Let $X^0$ and $Y^0$ be a spherical homogeneous $G$-spaces, let $(X,i)$ and $(Y,j)$ be spherical embeddings of $X^0$ and $Y^0$, respectively. A \emph{morphism $(X^0,X,i) \to (Y^0,Y,j)$ of spherical embeddings} is a $G$-equivariant morphism $\varphi^0\colon X^0 \to Y^0$ over $S$ such that there exists a $G$-equivariant morphism $\varphi\colon X \to Y$ with $\varphi \circ i = j \circ \varphi^0$.
\end{definition}

Note that $\varphi^0$ is automatically surjective. If $(X,i)$ is a spherical embedding of $X^0$, then the $i(X^0)$ is open and on all geometric fibers schematically dense in $X$. As spherical spaces are separated, this shows that $\varphi$ is uniquely determined by $\varphi^0$ if it exists.

\begin{remark}\label{BCMorph}
Let $X^0$ be a spherical homogenous $G$-space over $S$ and let $S' \to S$ be a morphism of schemes. Then the base change functor $(\ )_{S'}$ yields a functor from the category of $G$-spherical embeddings of $X^0$ to the category of $G_{S'}$-spherical embeddings of $(X_0)_{S'}$.
\end{remark}

\begin{remark}\label{SphericalEmbExist}
Let $G$ be a reductive group scheme over $S$ and let $X$ be a spherical $G$-space. Then there exists a homogenous spherical $G$-space $X_0$ and a spherical embedding $X^0 \to X$. Indeed, let $X^0$ be the open subspace of $X$ defined in \eqref{EqDefHomogGeneral}. Then $X^0$ is $G$-invariant and a homogeneous spherical $G$-space by Remark~\ref{HomogSpherical}. 
\end{remark}


\section{Classification of spherical embeddings over algebraically closed fields}\label{CLASSALG}

In the remaining parts of the paper we will give a classification of spherical embeddings over arbitrary fields by reducing it to the classification over algebraically closed fields by Knop (\cite{Knop_LunaVust}, see also \cite{LunaVust} in characteristic $0$). Hence let us first recall this classification. We follow \cite{Knop_LunaVust} almost verbatim.

Until the end of this section $k$ will denote an algebraically closed field, $G$ will be a reductive group over $k$ and $X^0$ will be a homogeneous spherical $G$-space (hence the choice of some $x \in X^0(k)$ would yield a $G$-equivariant isomorphism $X^0 \to G/H$ for some spherical subgroup $H \subseteq G$).

\subsection{Data attached to homogeneous spherical spaces}\label{DataGH}

We choose a Borel subgroup $B$ of $G$ and a maximal torus $T$ of $B$. Denote by $K(X^0)^{(B)}$ the set of $B$-eigenvectors in the function field $K(X^0)$, i.e. the subset of elements $0 \ne v \in K(X^0)$ such that $b\cdot v = \chi_v(b)v$ for some character $\chi_v$ of $B$. This is a subgroup of $K(X^0)^{\times}$ and
\[
\chi\colon K(X^0)^{(B)} \to X^*(B) = X^*(T), \qquad v \sends \chi_v
\]
is a homomorphism of groups, where $X^*(H)$ denotes the group of characters of an algebraic group $H$. Set
\begin{align*}
\Lambda := \Lambda_{(G,X^0)} := &\Im(\chi) \subseteq X^*(B), \\
\Qcal := \Qcal_{(G,X^0)} := &\Hom_{\ZZ}(\Lambda,\QQ) = \Lambda\vdual_{\QQ}.
\end{align*}
Via bi-duality we will consider $\Lambda$ as a subgroup of $\Qcal\vdual$, the dual of the finite-dimensional $\QQ$-vector space $\Qcal$.

As usual, $\Lambda$ (and hence $\Qcal$) do not depend on the choice of $(B,T)$ up to {\em unique} isomorphism: if $(B',T')$ is a second Borel pair, then there exists $g \in G(k)$ such that $B = gBg^{-1}$ and $T = gTg^{-1}$ and conjugation with $g$ yields an isomorphism $X^*(B) \to X^*(B')$ that is independent of the choice of $g$ because for any other choice $g'$ one has $g' = gt$ for some $t \in T(k)$.

Any $\QQ$-valued valuation $v$ on $K(X^0)$ that is trivial on $k$ induces a homomorphism $\Lambda \to \QQ$ and hence an element $\rho_v \in \Qcal$. Let
\[
\Vcal := \Vcal_{(G,X^0)}
\]
be the set of $\QQ$-valued $G$-invariant valuations on $K(X^0)$ that are trivial on $k$. Then the map
\[
\Vcal \to \Qcal, \qquad v \sends \rho_v
\]
is injective and we identify $\Vcal$ with its image in $\Qcal$.

Let $D$ be a prime divisor on $X^0$, where by a divisor we always mean a Weil divisor. Then the local ring at the generic point of $D$ is a discrete valuation ring and we obtain a valuation $v_D$ of $K(X^0)$ that is trivial on $k$. Set
\[
\Dcal := \Dcal_{(G,X^0)} := \{\text{$B$-stable prime divisors in $X^0$}\}.
\]
This is a finite set. We obtain a map
\[
\rho\colon \Dcal \to \Qcal, \qquad D \sends \rho_{v_D}.
\]
Again, $\Dcal$ and $\rho$ depend on the choice of $(B,T)$ only up to unique isomorphism: if $(B',T') = g(B,T)g^{-1}$ for some $g \in G(k)$ then conjugation with $g$ yields a bijection between the sets $\Dcal$ defined with $B$ and $B'$ which is independent of the choice of $g$.

These construction are functorial in the following sense. Let $X_1^0$ and $X^0_2$ be spherical homogeneous $G$-spaces and let $\varphi^0\colon X^0_1 \to X^0_2$ be a $G$-equivariant morphism. Then $\varphi^0$ induces an injection $\Lambda_{(G,X^0_2)} \to \Lambda_{(G,X^0_1)}$ and hence a surjective $\QQ$-linear map
\[
\varphi^0_*\colon \Qcal_{(G,X^0_1)} \to \Qcal_{(G,X^0_2)}
\]
such that $\varphi^0_*(\Vcal_{(G,X^0_1)}) = \Vcal_{(G,X^0_2)}$. Moreover, by sending a divisor to its schematic image we obtain a map
\[
\varphi^0_*\colon \Dcal_{\varphi^0} := \sett{D \in \Dcal_{(G,X^0_1)}}{$\varphi^0(D)$ is not dense in $X^0_2$} \to \Dcal_{(G,X^0_2)}.
\]

\subsection{Colored Fans}

We now recall the notion of a colored cone and a colored fan for the finite-dimensional $\QQ$-vector space $\Qcal$, the subset $\Vcal \subseteq \Qcal$, the finite set $\Dcal$, and the map $\rho\colon \Dcal \to \Qcal$.

\begin{definition}\label{DefColoredCone}
A \emph{colored cone for $(G,X^0)$} (or \emph{for $(\Qcal,\Vcal,\Dcal, \rho)$}) is a pair $(C,F)$ where $C \subseteq \Qcal_{(G,X^0)}$ is a cone (i.e., closed under addition and multiplication with $\QQ_{\geq0}$) and $F \subseteq \Dcal_{(G,X^0)}$ is a subset satisfying the following properties.
\begin{definitionlist}
\item[(CC1)]
$C$ is generated as a cone by $\rho(F)$ and finitely many elements of $\Vcal_{(G,X^0)}$.
\item[(CC2)]
The relative interior $C^\circ$ of $C$ (i.e., $C$ minus all proper faces) meets $\Vcal_{(G,X^0)}$.
\end{definitionlist}
A colored cone $(C,F)$ is called \emph{strictly convex} if $C$ is strictly convex (i.e. $C \cap (-C) = \{0\}$) and $0 \notin \rho(F)$. 
\end{definition}

Let $(C,F)$ be a colored cone for $(G,X^0)$ and let $C_0$ be a face of $C$ whose relative interior $C_0^\circ$ meets $\Vcal$. Set $F_0 := F \cap \rho^{-1}(C_0)$. Then $(C_0,F_0)$ is again a colored cone and such a pair is called a \emph{face of $(C,F)$}. If $(C,F)$ is strictly convex so is $(C_0,F_0)$.

\begin{definition}\label{DefColordFan}
A \emph{colored fan for $(G,X^0)$} (or \emph{for $(\Qcal,\Vcal,\Dcal, \rho)$}) is a nonempty finite set $\Fscr$ of colored cones for $(G,X^0)$ satisfying the following properties.
\begin{definitionlist}
\item[(CF1)]
Every face of a colored cone in $\Fscr$ is again in $\Fscr$.
\item[(CF2)]
For every $v \in \Vcal$ there exists at most one $(C,F) \in \Fscr$ with $v \in C^{\circ}$.
\end{definitionlist}
A colored fan $\Fscr$ is called \emph{strictly convex} if all elements of $\Fscr$ are strictly convex.
\end{definition}

We also have the notion of a morphisms of colored fans.

\begin{definition}\label{DefMorphColored}
Let $X^0_1$ and $X^0_2$ be spherical homogeneous $G$-spaces and let $\varphi^0\colon X^0_1 \to X^0_2$ be a $G$-equivariant morphism.
\begin{assertionlist}
\item
Let $(C_1,F_1)$ and $(C_2,F_2)$ be colored cones for $(G,X^0_1)$ and $(G,X^0_2)$, respectively. Then $\varphi^0$ is called a \emph{morphism $(C_1,F_1) \to (C_2,F_2)$ of colored cones} if $\varphi^0_*(C_1) \subseteq C_2$ and $\varphi^0(F_1 \cap \Dcal_{\varphi}) \subseteq F_2$.
\item
Let $\Fscr_1$ and $\Fscr_2$ be colored fans for $(G,X^0_1)$ and $(G,X^0_2)$, respectively. Then $\varphi^0$ is called a \emph{morphism $\Fscr_1 \to \Fscr_2$ of colored fans} if for every $(C_1,F_1) \in \Fscr_1$ there exists $(C_2,F_2) \in \Fscr_2$ such that $\varphi^0$ is a morphism $(C_1,F_1) \to (C_2,F_2)$ of colored cones.
\end{assertionlist}
\end{definition}

\subsection{Colored fans attached to spherical embeddings}\label{CCSpherical}

Let $(X,i)$ be a spherical embedding of $X^0$. For every $G$-orbit $Y$ in $X$, let ${\rm Prim}_Y(X)$ be the set of prime divisors on $X$ containing $Y$. Set
\begin{align*}
\Bcal_Y(X) &:= \sett{v_D \in \Vcal}{$D \in {\rm Prim}_Y(X)$ and $G$-stable},\\
\Fcal_Y(X) &:= \sett{D \cap X^0 \in \Dcal}{$D \in {\rm Prim}_Y(X)$ and $B$-stable but not $G$-stable},\\
\Ccal_Y(X) &:= \langle \Bcal_Y(X), \rho(\Fcal_Y(X))\rangle,
\end{align*}
where the last line means that $\Ccal_Y(X)$ is the cone in $\Qcal_{(G,X^0)}$ generated by $\Bcal_Y(X)$ and $\rho(\Fcal_Y(X))$. Then
\[
\Fscr(X,i) := \sett{(\Ccal_Y(X),\Fcal_Y(X))}{$Y \subseteq X$ is a $G$-orbit}
\]
is a strictly convex colored fan. Now the main result of \cite{Knop_LunaVust} is the following.

\begin{theorem}\label{ClassSphericalAlg}
Let $G$ be a reductive group over an algebraically closed field $k$ and let $X^0$ be a homogeneous spherical $G$-space. The constructions above yield an equivalence $(X,i) \sends \Fscr(X,i)$ between the category of spherical embeddings of $X^0$ and the category of colored fans for $(G,X^0)$.
\end{theorem}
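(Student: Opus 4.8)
The plan is to follow the strategy of Luna--Vust as carried out by Knop \cite{Knop_LunaVust}: reduce the global classification to a local one, where ``local'' means \emph{simple} embeddings --- spherical embeddings possessing a unique closed $G$-orbit --- and match these with strictly convex colored cones, recovering the general case by a gluing argument governed by axioms (CF1) and (CF2). Concretely I would organise the proof in three parts: (i) the functor $(X,i)\sends\Fscr(X,i)$ is well defined, which the discussion preceding the theorem has already reduced to the assertion that $\Fscr(X,i)$ is a strictly convex colored fan; (ii) full faithfulness; (iii) essential surjectivity.

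For the local picture I would first prove that a simple spherical embedding $X$ with closed orbit $Y$ is determined by, and determines, its colored cone $(\Ccal_Y(X),\Fcal_Y(X))$, and that the $G$-orbits of $X$ are in order-reversing bijection with the faces $(C_0,F_0)$ of $(\Ccal_Y(X),\Fcal_Y(X))$ whose relative interior meets $\Vcal$, an orbit $Y'$ with $Y\subseteq\overline{Y'}$ corresponding to the face $\Ccal_{Y'}(X)$; in particular the open orbit $X^0$ corresponds to the face $(\{0\},\emptyset)$. The tool here is the local structure theorem (Knop, after Sumihiro): there is a parabolic $P\supseteq B$ stabilising the open $B$-orbit of $X^0$ and a $P$-stable affine open subset $X_{B,Y}\subseteq X$, namely the complement of the $B$-stable divisors not containing $Y$, together with a $P$-equivariant isomorphism $X_{B,Y}\cong P_u\times Z$ in which $Z$ is an affine toric variety for a quotient torus of a Levi of $P$, its defining cone being exactly $C=\Ccal_Y(X)$ and $F$ recording the absorbed colors. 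Conversely, given a strictly convex colored cone $(C,F)$ I would build $X_{C,F}$ as the spectrum of the ring of functions in $K(X^0)$ that are regular along every color not in $F$ and non-negative along the $G$-invariant valuations generating $C$, equip it with the $P$-action provided by the local model, and set $X^{C,F}:=G\cdot X_{C,F}$ inside a suitable ambient $G$-space; normality, separatedness, and the simple and spherical properties are then read off from the construction, and one checks that $\Fscr(X^{C,F})$ is the set of faces of $(C,F)$ meeting $\Vcal$.

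For the global statement I would use that any spherical embedding $X$ of $X^0$ is the union of the $G$-stable open subsets $X_Y:=\sett{x\in X}{Y\subseteq\overline{G\cdot x}}$ as $Y$ ranges over the (finitely many) closed $G$-orbits of $X$; each $X_Y$ is a simple embedding with closed orbit $Y$, so by the local step it carries a colored cone, and $\Fscr(X,i)$ is the set of these together with their faces. Consistency of the gluing is precisely axiom (CF2): a $G$-invariant $\QQ$-valuation has a centre on at most one of the $X_Y$, which is the statement that a point of $\Vcal$ lies in the relative interior of at most one cone of the fan; conversely, given a colored fan $\Fscr$, the simple embeddings attached to its maximal cones glue along the open subembeddings corresponding to shared faces and produce a spherical embedding with fan $\Fscr$. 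Full faithfulness then follows from the valuative description of $G$-morphisms: a $G$-equivariant $\varphi^0\colon X^0_1\to X^0_2$ admits an extension $\varphi\colon X_1\to X_2$ --- necessarily unique, since $X^0_1$ is schematically dense in $X_1$ and $X_2$ is separated --- if and only if $\varphi^0_*$ carries every cone of $\Fscr_1$ into a cone of $\Fscr_2$ with colors going to colors, i.e. $\varphi^0$ is a morphism of colored fans; one implication is a direct computation on the affine charts, the other uses that every $G$-invariant valuation with a centre on $X_1$ also has a centre on $X_2$.

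The main obstacle is the local structure theorem together with the orbit/face dictionary in arbitrary characteristic. Over fields of positive characteristic one cannot use Lie-algebra or differential arguments, and Knop's substitute --- the analysis of $B$-charts through the combinatorics of $K(X^0)^{(B)}$, the valuation cone $\Vcal$ (which is, nontrivially, a convex cone), and the colors $\Dcal$, together with the finiteness of $\Dcal$ and of the set of orbits --- is the technical heart of the argument. Once it is available, the reduction to simple embeddings, the gluing, and full faithfulness are formal consequences of the cone combinatorics, and I would treat them as routine.
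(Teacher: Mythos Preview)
The paper does not prove this theorem at all: it is stated in Section~\ref{CLASSALG} purely as a recollection of ``the main result of \cite{Knop_LunaVust}'', with no argument given beyond the definitions of the data $(\Qcal,\Vcal,\Dcal,\rho)$ and the construction of $\Fscr(X,i)$. Your proposal is therefore not competing with a proof in the paper but with Knop's original proof, and what you have written is a faithful outline of precisely that argument --- reduction to simple embeddings, the $B$-chart/local structure description identifying a simple embedding with a strictly convex colored cone, gluing governed by (CF1)--(CF2), and full faithfulness via the valuative criterion for extending $G$-morphisms. For the purposes of this paper a one-line citation to \cite{Knop_LunaVust} suffices; your sketch is correct but far more detailed than what the text requires.
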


\begin{remark}\label{RemClassAlg}
Let $G$ be a reductive group over $k$. The functoriality arguments in \cite[\S4]{Knop_LunaVust} show that if $X^0$ and $Y^0$ are homogeneous spherical $G$-spaces and $(X,i)$ and $(Y,j)$ are spherical embeddings of $X^0$ and $Y^0$ respectively, then a $G$-equivariant morphism $\varphi^0\colon X^0 \to Y^0$ is a morphism of spherical embeddings $(X^0,X,i) \to (Y^0,Y,j)$ if and only if $\varphi^0$ is a morphism of colored fans $\Fscr(X,i) \to \Fscr(Y,j)$.
\end{remark}


\section{Classification of spherical embeddings over separably closed fields}\label{CLASSSEP}

In this section we show that the classification of spherical embeddings over algebraically closed fields extends to separably closed fields $k$. Of course we may assume that the characteristic of $k$ is $p > 0$. Fix an algebraic closure $\kbar$ of $k$. 

By Remark~\ref{RemSphericalSpace} every spherical space over a separably closed field is a scheme. Hence in this section we will work only with schemes.

The field extension $k \mono \kbar$ is purely inseparable. Let us first recall some facts about such extensions.

\begin{remark}\label{PassageInsep}
Let $G$ be a smooth algebraic group over $k$ and let $X$ be any $k$-scheme of finite type with $G$-action. Let $\pi\colon X_{\kbar} \to X$ be the projection morphism.
\begin{assertionlist}
\item\label{PassageInsep1}
The morphism $\pi$ is a universal homeomorphism (\cite[5.46]{GW}). In particular $Y \sends \pi^{-1}(Y)_{\rm red}$ yields a bijection between reduced subschemes of $X$ and reduced subschemes of $X_{\kbar}$. Moreover, $Y$ is closed (resp.~irreducible, resp.~a prime Weil divisors) if any only if $\pi^{-1}(Y)_{\rm red}$ is. The subscheme $Y$ is $G$-invariant if and only if $\pi^{-1}(Y)_{\rm red}$ is $G_{\kbar}$-invariant.
\item\label{PassageInsep2}
If $X$ is geometrically reduced, then $X(k)$ is dense in $X$ (\cite[6.21]{GW}). In particular, every homogeneous $G$-space over $k$ (Definition~\ref{DefHomog}) is of the form $G/H$ for some algebraic subgroup $H$ of $G$. 
\item\label{PassageInsep3}
If $Y'$ is a $G_{\kbar}$-orbit in $X_{\kbar}$. Then by \ref{PassageInsep1} there exists a unique reduced $G$-invariant subscheme $Y$ of $X$ such that $\pi^{-1}(Y)_{\rm red} = Y'$. This is clearly a minimal $G$-invariant subscheme and we see that the bijection in \ref{PassageInsep1} yields a bijection between minimal $G$-invariant subschemes of $X$ and $G_{\kbar}$-orbits of $X_{\kbar}$.
\item\label{PassageInsep4}
If $X$ is geometrically integer over $k$, then $K(X_{\kbar}) = K(X) \otimes_k \kbar$ and the extension of function field $K(X) \subseteq K(X_{\kbar})$ is purely inseparable. In particular, any $\QQ$-valued valuation $v$ on $K(X)$ extends uniquely to a $\QQ$-valued valuation $\vbar$ on $K(X_{\kbar})$ (\cite[VI~\S8]{BouAC}). Moreover, $v$ is trivial on $k$ (resp.~$G$-invariant) if and only if $\vbar$ is trivial on $\kbar$ (resp.~$G_{\kbar}$-invariant).
\item\label{PassageInsep5}
If $G$ is reductive over $k$, then $G$ is split (\cite[Exp. XXII, Cor.~2.4]{SGA3}).
\end{assertionlist}
\end{remark}

We now classify spherical embeddings of spherical homogeneous spaces $X^0$ over the separably closed field $k$. By Remark~\ref{PassageInsep}~\ref{PassageInsep2} $X^0$ is of the form $G/H$ for some spherical subgroup $H$. The following result reduces the classification over $k$ to the known classification over $\kbar$.

\begin{theorem}\label{ClassifySepClosed}
Let $G$ be a reductive group over a separably closed field $k$ and let $X^0$ be a homogeneous spherical $G$-space. Then $X \sends X_{\kbar}$ yields a bijection between isomorphism classes of $G$-spherical embeddings of $X^0$ and isomorphism classes of $G_{\kbar}$-spherical embeddings of $X^0_{\kbar}$.
\end{theorem}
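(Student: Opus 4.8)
The plan is to show that the base change functor $X \mapsto X_{\kbar}$ from $G$-spherical embeddings of $X^0$ to $G_{\kbar}$-spherical embeddings of $X^0_{\kbar}$ is essentially surjective and fully faithful on isomorphism classes, exploiting that $\pi \colon X_{\kbar} \to X$ is a universal homeomorphism (Remark~\ref{PassageInsep}\ref{PassageInsep1}) and that, after reduction via Theorem~\ref{ClassSphericalAlg}, spherical embeddings of $X^0_{\kbar}$ are governed by the purely combinatorial datum $(\Qcal, \Vcal, \Dcal, \rho)$ attached to $(G_{\kbar}, X^0_{\kbar})$. The first step is to observe that this combinatorial datum is \emph{unchanged} under the purely inseparable base change $k \mono \kbar$: by Remark~\ref{PassageInsep}\ref{PassageInsep4} the lattice $\Lambda_{(G,X^0)}$ computed from $B$-eigenvectors in $K(X^0)$ agrees with $\Lambda_{(G_{\kbar}, X^0_{\kbar})}$ (a $B$-eigenvector upstairs has some $p$-power lying in $K(X^0)$, and conversely eigenvectors extend), so $\Qcal$ is the same; $G$-invariant $\QQ$-valued valuations correspond bijectively by unique extension, so $\Vcal$ is the same subset of $\Qcal$; and by Remark~\ref{PassageInsep}\ref{PassageInsep1} the bijection $D \sends \pi^{-1}(D)_{\rm red}$ between prime Weil divisors matches $B$-stable ones with $B_{\kbar}$-stable ones, compatibly with $\rho$ (the valuation $v_D$ corresponds to $v_{\pi^{-1}(D)_{\rm red}}$ up to the unique extension, hence induces the same element of $\Qcal$), so $\Dcal$ and $\rho$ are the same. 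Consequently the \emph{sets} of colored fans for $(G,X^0)$ and for $(G_{\kbar}, X^0_{\kbar})$ literally coincide.

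The second step handles essential surjectivity. Given a $G_{\kbar}$-spherical embedding $(X', i')$ of $X^0_{\kbar}$, by Theorem~\ref{ClassSphericalAlg} it corresponds to a colored fan $\Fscr$ for $(G_{\kbar}, X^0_{\kbar})$, which by Step~1 is also a colored fan for $(G, X^0)$. I claim there is a spherical embedding $(X,i)$ of $X^0$ over $k$ with $\Fscr(X,i) = \Fscr$, unique up to isomorphism, and that $X_{\kbar} \cong X'$. The construction is local: the colored cones $(C,F) \in \Fscr$ index $G$-orbits, and one builds $X$ by gluing the simple embeddings $X_{(C,F)}$; for the simple case one can argue directly that the simple embedding $X'_{(C,F)}$ over $\kbar$, being a $G_{\kbar}$-stable open subscheme containing $X^0_{\kbar}$ with a unique closed orbit, descends: since $\pi$ is a universal homeomorphism, $G$-stable open subsets of $X$ correspond to $G_{\kbar}$-stable open subsets of its base change, so it suffices to produce \emph{some} spherical embedding of $X^0$ over $k$ whose base change has fan containing $\Fscr$ and then pass to the appropriate open subspace. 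Concretely, embed $X^0 = G/H$ $G$-equivariantly into $\PP(V)$ for a suitable $G$-representation $V$ over $k$ (possible by standard arguments, e.g.\ Chevalley's theorem over $k$ since $H$ is an algebraic subgroup by Remark~\ref{PassageInsep}\ref{PassageInsep2}), take the normalization of the closure; its base change to $\kbar$ is then a spherical embedding of $X^0_{\kbar}$ and after repeatedly passing to $G$-stable opens and further modifications one realizes every colored fan over $\kbar$ by the base change of one over $k$. Here one uses that normalization commutes with the base change $k \to \kbar$ because the residue field extensions are purely inseparable, so geometric normality is automatic; and that Theorem~\ref{ClassSphericalAlg} identifies when two such base-changed embeddings coincide, namely exactly when their fans do, which by Step~1 one controls over $k$.

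The third step is full faithfulness. A $G$-equivariant isomorphism $\varphi\colon X_1 \to X_2$ of spherical embeddings of $X^0$ base-changes to an isomorphism $\varphi_{\kbar}$ of the base changes, giving injectivity of $\mathrm{Isom} \to \mathrm{Isom}_{\kbar}$ on the level of isomorphisms restricting to $\mathrm{id}$ on $X^0_{\kbar}$ (and any isomorphism of embeddings restricts to an \emph{automorphism} of $X^0$, which for $X^0$ homogeneous is controlled by $N_G(H)/H$ and in any case base-changes faithfully). Conversely, given a $G_{\kbar}$-isomorphism $(X_1)_{\kbar} \cong (X_2)_{\kbar}$ of embeddings, Remark~\ref{RemClassAlg} says the two base-changed fans agree as colored fans for $(G_{\kbar}, X^0_{\kbar})$; by Step~1 they agree as colored fans for $(G, X^0)$, hence by the (already established) bijectivity of $(X,i) \mapsto \Fscr(X,i)$ over $k$ — which itself is part of what Steps 1--2 give, since $\Fscr(X_j,i_j)$ can be computed either over $k$ or after base change and the two computations match — we conclude $(X_1,i_1) \cong (X_2,i_2)$ over $k$. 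The main obstacle is Step~2: one must verify that the \emph{construction} of a spherical embedding from a colored fan can be carried out over the non-algebraically-closed (merely separably closed) field $k$ and is compatible with base change to $\kbar$; the key technical points are that simple embeddings are distinguished by purely topological and divisorial data on which $\pi$ induces bijections, that normalizations commute with the purely inseparable base change, and that $G$ is split over $k$ (Remark~\ref{PassageInsep}\ref{PassageInsep5}) so that Borel subgroups and the whole setup of Section~\ref{CLASSALG} are available over $k$ itself. Once that is in place, Steps~1 and~3 are essentially formal bookkeeping.
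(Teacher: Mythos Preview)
Your Steps~1 and~3 are essentially correct and mirror what the paper does (its step~(i) is your Step~1, and injectivity on isomorphism classes is indeed fpqc descent: an isomorphism of embeddings over $\kbar$ restricting to the identity on $X^0_{\kbar}$ is uniquely determined by schematic density, hence satisfies the descent condition automatically).

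The genuine gap is your Step~2. Your proposed construction --- embed $X^0=G/H$ in $\PP(V)$, take the normalization of the closure, then ``repeatedly pass to $G$-stable opens and further modifications'' --- does not actually realize an arbitrary colored fan. A projective closure gives you \emph{one} particular colored fan, and passing to $G$-stable opens only lets you shrink it (remove cones); you have no mechanism for \emph{enlarging} the fan or adjusting its colors. The phrase ``further modifications'' hides exactly the content of Knop's existence theorem (\cite[Theorem~3.1]{Knop_LunaVust}), which is what you need and have not supplied. You yourself flag this at the end (``one must verify that the construction of a spherical embedding from a colored fan can be carried out over~$k$''), but the projective-embedding route does not do that verification.

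The paper takes a different and more direct approach: rather than trying to descend embeddings from $\kbar$, it reruns Knop's entire argument over the separably closed field $k$, checking that each step of \cite{Knop_LunaVust} survives. Two points require genuine modification: in \cite[Theorem~1.1]{Knop_LunaVust} the eigenvector $f'$ is a~priori only defined over a finite purely inseparable extension $k_1$, and one replaces it by $(f')^{p^e}$ to land in $k[X]^{(B)}$; and in \cite[Corollary~1.7]{Knop_LunaVust} the existence of a central isogeny $\widetilde G\to G$ with $\Pic(\widetilde G)=1$ must be invoked over an arbitrary field (the paper cites \cite[Prop.~3.1]{CT_Res}). With those fixes, the uniqueness argument (\cite[Theorem~2.3]{Knop_LunaVust}), the existence argument (\cite[Theorem~3.1]{Knop_LunaVust}), and the gluing of simple embeddings (\cite[Theorem~3.3]{Knop_LunaVust}) all go through verbatim over $k$. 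This is what your Step~2 is missing: not a descent trick, but the observation that Knop's proof is already almost characteristic-free and field-independent, with two specific patches.
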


The proof is essentially by going through the proof of the classification of spherical embeddings over algebraically closed fields as explained in \cite{Knop_LunaVust} and showing that all arguments work -- after possible minor modifications -- over separably closed fields as well.

\begin{proof}
\proofstep{(i)}
We first show that all the constructions in Subsection~\ref{DataGH} and Subsection~\ref{CCSpherical} make also sense over a separably closed field and that one obtains the same invariants for $G$ and $X^0$ as for $G_{\kbar}$ and $X^0_{\kbar}$. Let $B$ be a Borel subgroup of $G$ and $T$ a maximal torus of $B$ (which exist by Remark~\ref{PassageInsep}~\ref{PassageInsep5}). Then we can identify $X^*(B) = X^*(B_{\kbar})$. Moreover it follows from Remark~\ref{PassageInsep}~\ref{PassageInsep4} that $K((X^0)_{\kbar})^{(B_{\kbar})} = K(X^0)^{(B)} \otimes_k \kbar$ and hence $\Lambda_{(G,X^0)} = \Lambda_{(G_{\kbar},X^0_{\kbar})}$. One obtains the $\QQ$-vector space $\Qcal_{(G,X^0)}$ and one has $\Qcal_{(G,X^0)} = \Qcal_{(G_{\kbar},X^0_{\kbar})}$. One defines $\Vcal_{(G,X^0)}$, $\Dcal_{(G,X^0)}$, and the map $\rho$ as in Subsection~\ref{DataGH}. Then point \ref{PassageInsep4} (resp.~\ref{PassageInsep1}) of Remark~\ref{PassageInsep} yields an identification $\Vcal_{(G,X^0)} = \Vcal_{(G_{\kbar},X^0_{\kbar})}$ (resp.~$\Dcal_{(G,X^0)} = \Dcal_{(G_{\kbar},X^0_{\kbar})}$) such that the map $\rho$ is the same.

For every $G$-spherical embedding $X$ of $X^0$ and every minimal $G$-invariant subscheme $Y$ of $X$ one defines the sets $\Bcal_Y(X)$, $\Fcal_Y(X)$, $\Ccal_Y(X)$ as in Subsection~\ref{CCSpherical}. Then one has with the previous identifications $\Bcal_{(Y_{\kbar})_{\red}}(X_{\kbar}) = \Bcal_Y(X)$ and similarly for the other sets.

\proofstep{(ii)}
We remark that it suffices to show the theorem for simple spherical embeddings, i.e., for spherical embeddings that have only one closed minimal $G$-invariant subscheme. The reason is that the gluing explained in \cite[Theorem~3.3]{Knop_LunaVust} carries over to the case of a separably closed base field because of Remark~\ref{PassageInsep}~\ref{PassageInsep1}.

\proofstep{(iii)}
Let $X$ be a simple spherical embedding of $G/H$ and $Y$ be the closed minimal $G$-invariant subscheme of $X$. Let us show that the colored cone $(\Ccal_Y(X),\Fcal_Y(X))$ determines the isomorphism class of $X$.

The same argument as in \cite[Lemma~2.4]{Knop_LunaVust} shows that $\Bcal_Y(X)$ can be recovered from $\Ccal_Y(X)$ and $\Fcal_Y(X)$. It uses \cite[\S1]{Knop_LunaVust} whose arguments go through for a separably closed base field except for the following modifications.
\begin{bulletlist}
\item
In \cite[Theorem~1.1]{Knop_LunaVust} one finds $f'$ a priori only in $k_1[X]^{(B)}$ where $k_1$ is an extension of $k$ in $\kbar$ such that there exists an integer $e \geq 1$ with $(k_1)^{p^e} \subseteq k$. Replacing $f'$ by $(f')^{p^e}$ one concludes that Theorem 1.1 also holds over separably closed fields.
\item
In the proof of \cite[Corollary~1.7]{Knop_LunaVust} one uses the fact that for every reductive group $G$ there exists a reductive group $\Gtilde$ and a central isogeny $\Gtilde \to G$ such that $\Pic(\Gtilde) = 1$. This holds over abritrary fields for example by \cite[Prop.~3.1]{CT_Res}.
\end{bulletlist}
It remains to show that $X$ is uniquely determined by $(\Bcal_Y(X),\Fcal_Y(X))$. This can be proved verbatim as in the proof of \cite[Theorem~2.3]{Knop_LunaVust} again using that all results of \cite[\S1]{Knop_LunaVust} are valid over separably closed base fields.

\proofstep{(iv)}
It remains to show that every strictly convex colored cone for $(G,X^0)$ comes from a simple spherical embedding. Here the arguments in the proof of \cite[Theorem~3.1]{Knop_LunaVust} can be used verbatim.
\end{proof}

\begin{remark}\label{FunctorialityInsep}
The base change functor from the category of spherical embeddings of $X^0$ to the category of spherical embeddings of $X^0_{\kbar}$ (Remark~\ref{BCMorph}) is usually not an equivalence of categories (consider for instance $G = \GG_m$ and $X^0 = \GG_m$ with the action by left translation).

Let $A = \Aut_G(X^0)$ be the automorphism group scheme of the $G$-scheme $X^0$. As $A(k) \to A(\kbar)$ is injective and as all sets of morphisms between spherical embeddings of $X^0$ (resp.~of $X^0_{\kbar}$) are by definition subsets of $A(k)$ (resp.~of $A(\kbar)$) one sees that the base change functor is always faithful. 

It is an equivalence of categories if $A(k) \to A(\kbar)$ is bijective. This is for instance the case if $\Norm_G(H) = H$.
\end{remark}

On the other hand, by definition of morphisms of spherical embeddings we obtain the following equivalence of categories as a corollary of the proof of Theorem~\ref{ClassifySepClosed}. For this we define the category of colored fans for $(G,X^0)$ over a separably closed field as in Definition~\ref{DefMorphColored}.

\begin{corollary}\label{ClassConeInsep}
The construction $(X,i) \sends \Fscr(X,i) := (\Ccal_Y(X),\Fcal_Y(X))_Y$ (where $Y$ runs through the set of minimal $G$-invariant subschemes of $X$) in the proof of Theorem~\ref{ClassifySepClosed} yields an equivalence of the category of spherical embeddings of $X^0$ and the category of colored fans for $(G,X^0)$.
\end{corollary}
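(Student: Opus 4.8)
The plan is to deduce this from facts already established over $\kbar$, together with the object-level part of Theorem~\ref{ClassifySepClosed} and step~(i) of its proof. Recall that step~(i) identifies the data $(\Qcal,\Vcal,\Dcal,\rho)$ for $(G,X^0)$ with that for $(G_{\kbar},X^0_{\kbar})$, and, under this identification, $\Fscr(X,i)$ with $\Fscr(X_{\kbar},i_{\kbar})$ for every spherical embedding $(X,i)$ of $X^0$. The same identification matches morphisms of colored fans for the $k$-data with morphisms of colored fans for the $\kbar$-data, since such a morphism is defined purely in terms of $\varphi^0_*$, $\Dcal_{\varphi^0}$ and $\rho$, all of which are compatible with the projection $X_{\kbar}\to X$ by Remark~\ref{PassageInsep}. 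It therefore suffices to show that $\Fscr(-)$ is faithful, full, and essentially surjective.

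Faithfulness is immediate: morphisms of spherical embeddings of $X^0$ and morphisms of colored fans for $(G,X^0)$ are, by definition, certain $G$-equivariant morphisms $\varphi^0$ of the homogeneous space $X^0$, and $\Fscr(-)$ acts as the identity on them. Essential surjectivity is contained in the proof of Theorem~\ref{ClassifySepClosed}: step~(iv) realizes, over $k$, any strictly convex colored cone by a simple spherical embedding, and step~(ii) glues these simple embeddings to realize an arbitrary colored fan for $(G,X^0)$. (Alternatively, compose the object-level bijections of Theorems~\ref{ClassifySepClosed} and~\ref{ClassSphericalAlg} with the identification of step~(i).)

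For fullness --- and, in the opposite direction, for the well-definedness of $\Fscr(-)$ on morphisms --- I would prove that a $G$-equivariant morphism $\varphi^0$ of $X^0$ is a morphism of spherical embeddings $(X,i)\to(Y,j)$ over $k$ if and only if $\varphi^0_{\kbar}$ is a morphism of spherical embeddings $(X_{\kbar},i_{\kbar})\to(Y_{\kbar},j_{\kbar})$ over $\kbar$. Given this, Remark~\ref{RemClassAlg} over $\kbar$ identifies the right-hand condition with ``$\varphi^0_{\kbar}$ is a morphism of colored fans $\Fscr(X_{\kbar},i_{\kbar})\to\Fscr(Y_{\kbar},j_{\kbar})$'', which by step~(i) is the condition ``$\varphi^0$ is a morphism of colored fans $\Fscr(X,i)\to\Fscr(Y,j)$'', and chaining these equivalences finishes the proof. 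The direction ``$\Rightarrow$'' of the auxiliary claim is trivial (base change the extending morphism). For ``$\Leftarrow$'', let $\bar\varphi\colon X_{\kbar}\to Y_{\kbar}$ be a $G_{\kbar}$-equivariant morphism with $\bar\varphi\circ i_{\kbar}=j_{\kbar}\circ\varphi^0_{\kbar}$. Since $\Spec\kbar\to\Spec k$ is an fpqc covering, $\bar\varphi$ descends to a $k$-morphism $\varphi\colon X\to Y$ as soon as its two pullbacks to $X_{\kbar\otimes_k\kbar}$ agree; both of these restrict on the open subspace $X^0_{\kbar\otimes_k\kbar}$ to the base change of $j\circ\varphi^0$, hence agree there, and since $X^0_{\kbar}$ is schematically dense in $X_{\kbar}$, schematic density persists under the flat base change $\kbar\to\kbar\otimes_k\kbar$, and $Y$ is separated, the two pullbacks coincide. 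Equivariance of $\varphi$ and the relation $\varphi\circ i=j\circ\varphi^0$ are then checked after the faithfully flat base change to $\kbar$.

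The main obstacle is this last descent step. It is routine, but one cannot use topological density of $X^0$ because $\kbar\otimes_k\kbar$ is a non-reduced Artinian ring; one argues instead with stability of schematic density under flat base change and with separatedness of $Y$. Everything else is a formal consequence of Theorem~\ref{ClassifySepClosed}, its proof, Theorem~\ref{ClassSphericalAlg}, and Remark~\ref{RemClassAlg}.
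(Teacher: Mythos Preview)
Your proof is correct and follows essentially the same approach as the paper's. The paper's argument is more terse: after invoking Theorem~\ref{ClassifySepClosed} for objects, it reduces fullness to the equivalence ``$\varphi^0$ extends over $k$ $\Leftrightarrow$ $\varphi^0_{\kbar}$ extends over $\kbar$'' and dispatches this with the phrase ``clear by faithfully flat descent because the extension is unique if it exists'', whereas you spell out the descent datum on $X_{\kbar\otimes_k\kbar}$ and the schematic density argument explicitly.
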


Note that as in Section~\ref{CLASSALG} the colored fan attached to $X$ depends on the choice of the Borel pair $(B,T)$ only up to unique isomorphism as for any two Borel pairs $(B,T)$ and $(B',T')$ over the separably closed field $k$ there exists an element $g \in G(k)$ with $B' = gBg^{-1}$ and $T' = gTg^{-1}$, and $g$ is unique up to right multiplication with an element in $T(k)$ (\cite[Exp.~XXVI~Lemme~1.16]{SGA3}).

\begin{proof}
We have to show that for two spherical embeddings $(X,i)$ and $(Y,j)$ of $X^0$ a $G$-equivariant morphism $\varphi^0\colon X^0 \to X^0$ extends to $X \to Y$ if and only if $\varphi^0$ is a morphism of the colored fans $\Fscr(X,i) \to \Fscr(Y,j)$. By Theorem~\ref{ClassSphericalAlg} it suffices to show that $(\varphi^0)_{\kbar}$ extends to $X_{\kbar} \to Y_{\kbar}$ if and only if $\varphi^0$ extends to $X \to Y$. But this is clear by faithfully flat descent because the extension is unique if it exists.
\end{proof}


\section{Galois descent for algebraic spaces with group actions}\label{DESCENT}

Before classifying spherical embeddings over arbitrary fields we recall some facts about Galois descent for algebraic spaces. All of them are probably well known but we could not find a good reference. Let $k$ be a field, let $k'$ be a Galois extension of $k$, and let $\Gamma := \Gal(k'/k)$. We also view $\Gamma$ as a projective limit $\Gamma_k$ of finite constant group schemes $\Gal(k'/k_1)_k$ over $k$, where $k_1$ runs through finite sub extensions of $k \subseteq k'$. Moreover we fix an algebraic group $G$ over $k$.

If $X$ is an algebraic space over $k$, $X' := X_{k'} = X \otimes_k k'$ is an algebraic space over $k'$ endowed with a $\Gamma$-action $a_X$ via the second factor which is compatible with the action on $k'$, i.e., for every $\gamma$ one has an automorphism $\gamma_{X}\colon X' \to X'$ making the diagram
\[\xymatrix{
X' \ar[r]^{\gamma_{X}} \ar[d] & X' \ar[d] \\
\Spec(k') \ar[r]^{\gamma} & \Spec(k')
}\]
commutative such that $\gamma_{X}\circ \delta_{X} = (\gamma\delta)_X$ for all $\gamma,\delta \in \Gamma$. We have the obvious notion of the category of algebraic spaces over $k'$ with compatible $\Gamma$-action. Moreover the action $a_X$ is \emph{continuous}, i.e., it yields a morphism of $k$-schemes $\Gamma_k \times X_{k'} \to X_{k'}$. The construction $X \sends (X_{k'},a_X)$ is functorial.

Now assume that $G$ acts on $X$. Then the functoriality shows that the action $a_X$ of $\Gamma$ on $X_{k'}$ is also $G$-compatible, i.e., for all $\gamma \in \Gamma$ the diagram
\[\xymatrix{
G_{k'} \times_{k'} X_{k'} \ar[r] \ar[d]_{\gamma_{G \times X}} & X_{k'} \ar[d]^{\gamma_X} \\
G_{k'} \times_{k'} X_{k'} \ar[r] & X_{k'}
}\]
is commutative, where the horizontal arrows are the action of $G_{k'}$ on $X_{k'}$.

\begin{proposition}\label{DescentSpace}
The functor $X \sends (X_{k'},a_X)$ is an equivalence of the category of algebraic spaces of finite type over $k$ with $G$-action with the category of algebraic spaces of finite type over $k'$ with $G_{k'}$-action endowed with a compatible and $G$-compatible continuous $\Gamma$-action.
\end{proposition}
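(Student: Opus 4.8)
The plan is to reduce to the case of a finite Galois extension, where the assertion is an instance of effective fppf descent, and then to pass to the general case by writing $k'$ as a filtered union of its finite Galois subextensions.

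\emph{The finite case.} Assume first that $k'/k$ is finite, so that $\Gamma$ is finite, a continuous $\Gamma$-action is just a $\Gamma$-action, and $\Spec(k')\to\Spec(k)$ is finite faithfully flat, hence an fppf covering. Via the canonical isomorphism $k'\otimes_k k'\cong\prod_{\gamma\in\Gamma}k'$ one identifies, in the classical way, an fppf descent datum on $X':=X_{k'}$ relative to $\Spec(k')\to\Spec(k)$ with a compatible $\Gamma$-action on $X'$: the $\gamma$-component of the descent isomorphism is the $\gamma$-semilinear automorphism $\gamma_X\colon X'\to X'$, and the cocycle condition translates into $\gamma_X\circ\delta_X=(\gamma\delta)_X$. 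Because algebraic spaces form a stack for the fppf topology (effectivity of descent together with descent of morphisms, see \cite{Stacks}), and because ``of finite type'' is stable under and descends along the faithfully flat morphism $\Spec(k')\to\Spec(k)$, the functor $X\sends(X_{k'},a_X)$ is an equivalence between algebraic spaces of finite type over $k$ and algebraic spaces of finite type over $k'$ equipped with a compatible $\Gamma$-action.

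\emph{Incorporating the group action.} An action of $G$ on a $k$-space $X$ is the datum of a morphism $G\times_k X\to X$ of $k$-spaces subject to the two usual identities, which are equalities of morphisms of $k$-spaces. Applying the previous equivalence simultaneously to $X$ and to $G\times_k X$ (whose associated object over $k'$ is $G_{k'}\times_{k'}X_{k'}$ with the diagonal $\Gamma$-action, the one on the first factor coming from $G$ being defined over $k$), and using that a morphism descends precisely when it is compatible with the descent data, one sees that such an action corresponds to a $G_{k'}$-action morphism on $X'$ compatible with those descent data --- which is exactly the requirement that $a_X$ be $G$-compatible --- the descended identities holding automatically since they hold after the faithfully flat base change. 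Full faithfulness follows in the same way: a $G_{k'}$- and $\Gamma$-equivariant morphism $X_{k'}\to Y_{k'}$ is in particular $\Gamma$-equivariant, hence descends to a morphism $X\to Y$ of $k$-spaces, which is then $G$-equivariant because it is so after base change to $k'$. This proves the proposition when $k'/k$ is finite.

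\emph{The general case and the main obstacle.} For arbitrary Galois $k'/k$, write $k'$ as the filtered union of its finite Galois subextensions $k_1$, put $\Gamma_1=\Gal(k_1/k)$, so $\Gamma=\varprojlim\Gamma_1$ and $\Gamma_k=\varprojlim(\Gamma_1)_k$. Given $(X',a_{X'})$ in the target category, the approximation result underlying Remark~\ref{Approx} (\cite[Tag~07SK]{Stacks}) yields a finite $k_1$ and a $G_{k_1}$-space $X_1$ of finite type together with a $G$-equivariant isomorphism $(X_1)_{k'}\cong X'$; since $X'$ is quasi-compact and quasi-separated and $\Gamma_k$ is the limit of the finite group schemes $(\Gamma_1)_k$, the continuity of $a_{X'}$ forces, after enlarging $k_1$, the $\Gamma$-action to be the base change of a compatible and $G$-compatible $\Gamma_1$-action on $X_1$; the finite case then descends $X_1$ to a $G$-space over $k$ that base-changes back to $X'$, and a parallel approximation argument for morphisms gives full faithfulness in general. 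The genuinely delicate point, which I expect to be the main obstacle, is precisely this reduction: one must verify that ``continuous'' in the sense defined above is exactly strong enough to make the Galois action factor through a finite quotient on a fixed finite-type model, and that the finite-level descent equivalences are compatible with the transition maps $k_1\subseteq k_2$; granting that, the rest is the classical effectivity of fppf descent for algebraic spaces plus the bookkeeping for the $G$-action carried out in the finite case.
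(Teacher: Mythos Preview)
Your proposal is correct and follows essentially the same strategy as the paper: reduce to a finite Galois subextension via approximation (the paper invokes Remark~\ref{Approx} directly), and then apply effectivity of fppf descent for algebraic spaces. The paper's proof is a two-line version of what you wrote; in particular, the reduction you flag as the ``main obstacle'' is exactly what the paper dispatches by citing Remark~\ref{Approx}, so your caution there is unnecessary but not misplaced.
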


\begin{proof}
By Remark~\ref{Approx} we may assume that $k'$ is a finite Galois extension of $k$. Then $\Spec k' \to \Spec k$ is faithfully flat of finite presentation and the result follows because fppf-descent data are effective for algebraic spaces (\cite[Tag~0ADV]{Stacks}).
\end{proof}

As usual one obtains a classification of forms by Galois cohomology groups: Let $X$ be an algebraic space of finite type over $k$ with $G$-action. Let $\Aut_G(X)(k')$ be the group of $G_{k'}$-equivariant automorphisms $X_{k'} \iso X_{k'}$ over $k'$. Then $\Gamma = \Gal(k'/k)$ acts on $\Aut_G(X)(k')$ by
\[
(\gamma,\alpha) \sends \gamma_X \circ \alpha \circ \gamma_X^{-1}, \qquad \gamma \in \Gamma, \alpha \in \Aut_G(X)(k')
\]
and this action factors through a quotient of $\Gamma$ by an open normal subgroup. We denote by $H^1(k'/k,\Aut_G(X)(k'))$ its first group cohomology. Let $E(k'/k,X)$ be the pointed set of isomorphism classes of forms of $X$, i.e. of algebraic spaces $X_1$ of finite type over $k$ with $G$-action such that there exists a $G$-equivariant isomorphism $X_{k'} \iso (X_1)_{k'}$. Then Proposition~\ref{DescentSpace} implies:

\begin{corollary}\label{ClassifyForm}
There is an isomorphism of pointed sets
\[
E(k'/k,X) \cong H^1(k'/k,\Aut_G(X)(k')).
\]
\end{corollary}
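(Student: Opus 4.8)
The plan is to deduce Corollary~\ref{ClassifyForm} directly from the equivalence of categories in Proposition~\ref{DescentSpace} by the standard Galois-descent argument for forms of an object under a groupoid of automorphisms. The only slightly nonstandard point is the need to keep track of the continuity of the $\Gamma$-action and to check that everything is compatible with the $G$-action.

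First I would set up the descent datum attached to a form. Let $X_1$ be a form of $X$, so that there is a $G_{k'}$-equivariant isomorphism $\beta\colon X_{k'} \iso (X_1)_{k'}$ over $k'$. Transporting the natural compatible continuous $\Gamma$-action $a_{X_1}$ on $(X_1)_{k'}$ through $\beta$ gives a second compatible, $G$-compatible, continuous $\Gamma$-action on $X_{k'}$, namely $\gamma \mapsto \beta^{-1} \circ (\gamma_{X_1}) \circ \beta$. Comparing it with the original action $a_X$ one sets $c_\gamma := (\beta^{-1}\circ \gamma_{X_1}\circ\beta)\circ \gamma_X^{-1} \in \Aut_G(X)(k')$, and the compatibility condition $\gamma_{X_1}\circ\delta_{X_1} = (\gamma\delta)_{X_1}$ together with the analogous identity for $a_X$ and the defining $\Gamma$-action on $\Aut_G(X)(k')$ translates into the cocycle identity $c_{\gamma\delta} = c_\gamma \cdot ({}^\gamma c_\delta)$. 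Continuity of both actions forces $\gamma \mapsto c_\gamma$ to factor through a finite quotient, so it lands in $H^1(k'/k,\Aut_G(X)(k'))$. A different choice of $\beta$ differs by an element of $\Aut_G(X)(k')$ and changes the cocycle by a coboundary, and $G$-equivariantly isomorphic forms produce cohomologous cocycles; this yields a well-defined map $E(k'/k,X)\to H^1(k'/k,\Aut_G(X)(k'))$ sending the class of $X$ itself to the distinguished point.

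Conversely, given a cocycle $(c_\gamma)$, define a new action $a'$ on $X_{k'}$ by $\gamma \mapsto c_\gamma\circ \gamma_X$; the cocycle relation is exactly what is needed for $a'$ to be a compatible $\Gamma$-action, it is $G$-compatible because each $c_\gamma$ is $G_{k'}$-equivariant and $a_X$ is $G$-compatible, and it is continuous because $(c_\gamma)$ factors through a finite quotient and $a_X$ is continuous. By Proposition~\ref{DescentSpace} the pair $(X_{k'},a')$ descends to an algebraic space $X_1$ of finite type over $k$ with $G$-action, and by construction $(X_1)_{k'}\cong X_{k'}$ $G$-equivariantly, so $X_1$ is a form of $X$. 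One checks that this inverse construction is independent of the choice of cocycle in its class and is inverse to the map of the previous paragraph, again invoking the faithfulness and essential surjectivity in Proposition~\ref{DescentSpace}; this gives the claimed bijection of pointed sets.

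The main obstacle, such as it is, is bookkeeping rather than mathematics: one must verify that the continuity hypothesis on $\Gamma$-actions is preserved under all the operations above (transport through $\beta$, twisting by a cocycle), and that the group-cohomological conventions — in particular the specific $\Gamma$-action on $\Aut_G(X)(k')$ recorded just before the statement, $(\gamma,\alpha)\mapsto \gamma_X\circ\alpha\circ\gamma_X^{-1}$ — match the twisting conventions so that cocycles really correspond to descent data. Since $\Aut_G(X)(k')$ need not be commutative, $H^1$ is only a pointed set and the statement asserts no more than an isomorphism of pointed sets, so there is no exact-sequence machinery to worry about; once the dictionary between cocycles and compatible continuous $G$-compatible $\Gamma$-actions is set up carefully, the corollary is immediate from Proposition~\ref{DescentSpace}.
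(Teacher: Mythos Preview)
Your proposal is correct and is exactly the standard Galois-descent argument the paper has in mind: the paper gives no proof at all beyond the phrase ``Then Proposition~\ref{DescentSpace} implies,'' and your write-up simply unpacks that implication with the usual dictionary between continuous $G$-compatible $\Gamma$-actions on $X_{k'}$ and $1$-cocycles in $\Aut_G(X)(k')$.
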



\section{Classification of spherical embeddings over arbitrary fields}

In this section let $k$ be an arbitrary field, let $k^s$ be a separable closure of $k$ and let $\kbar$ be an algebraic closure of $k^s$. Let $\Gamma = \Gal(k^s/k)$ be the Galois group of $k$. Whenever we speak of a continuous action of $\Gamma$ on a set $X$, we endow $X$ with the discrete topology and mean that the action map $\Gamma \times X \to X$ is continuous. If $X$ is finite and or if the action is linear on some finite-dimensional vector space, then the action is continuous if and only if it factors through some finite discrete quotient of $\Gamma$. Let $G$ be a reductive group over $k$.

Recall that every spherical $G$-space $X$ over $k$ is a spherical embedding of a spherical homogeneous $G$-space $X^0$ by Remark~\ref{SphericalEmbExist}. Moreover, $X^0$ is the unique open minimal $G$-invariant subspace $X^0$ of $X$. It is a smooth and quasi-projective scheme over $k$ by Lemma~\ref{HomogeneousScheme}.

Now fix a spherical homogeneous $G$-space $X^0$ over $k$. Then the Galois group $\Gamma$ acts linearly and continuously on $\Lambda_{(G_{k^s},X^0_{k^s})}$ and hence on the finite-dimensional $\QQ$-vector space $\Qcal := \Qcal_{(G_{k^s},X^0_{k^s})}$ continuously and linearly. Moreover it acts continuously on $\Vcal := \Vcal_{(G_{k^s},X^0_{k^s})}$ and the map $\Vcal \to \Qcal$ is $\Gamma$-equivariant. Finally, $\Gamma$ acts continuously on $\Dcal := \Dcal_{(G_{k^s},X^0_{k^s})}$ and the map $\rho\colon \Dcal \to \Qcal$ is $\Gamma$-equivariant. Here we use always that all construction depend on the choice of Borel pair of $G_{k_s}$ only up to a unique isomorphism.

We obtain the finite-dimensional $\QQ$-vector space $\Qcal$ with a continuous linear $\Gamma$-action, a subset $\Vcal \subseteq \Qcal$ that is $\Gamma$-stable, a finite set $\Dcal$ with a continuous $\Gamma$-action and a $\Gamma$-equivariant map $\rho\colon \Dcal \to \Qcal$.

If $(C,F)$ is a (strictly convex) colored cone for $(G_{k^s},X^0_{k^s})$, then $(\gamma(C),\gamma(F))$ is again a (strictly convex) colored cone for $(G_{k^s},X^0_{k^s})$ for all $\gamma \in \Gamma$. We call a colored fan $\Fscr$ for $(G_{k^s},X^0_{k^s})$ \emph{invariant under $\Gamma$} if for every $(C,F) \in \Fscr$ one has $(\gamma(C),\gamma(F)) \in \Fscr$ for all $\gamma \in \Gamma$.

Let $(X,i)$ be a spherical embedding of $X^0$. Then the constructions in $(X_{k^s},i_{k^s})$ yields a $\Gamma$-invariant colored fan $\Fscr(X,i)$ for $(G_{k^s},X^0_{k^s})$. Moreover if $(X,i)$ and $(X',i')$ are spherical embeddings of $X^0$ and $\varphi^0\colon X^0 \to X^0$ is a morphism $(X,i) \to (X',i')$ (i.e., $\varphi^0$ is $G$-equivariant and extends to a morphism $X \to X'$), then $\varphi^0_{k^s}$ is $\Gamma$-equivariant morphism of colored fans $\Fscr(X,i) \to \Fscr(X',i')$.

Altogether we obtain a functor $\Phi$ from the category of spherical embeddings of $X^0$ to the category of $\Gamma$-invariant colored fans for $(G_{k^s},X^0_{k^s})$.

\begin{theorem}\label{ClassField}
Let $X^0$ be a spherical homogeneous $G$-space over $k$. Then the following categories are equivalent.
\begin{equivlist}
\item
The category of spherical embeddings of $X^0$ (morphisms are morphisms $(X^0,X,i) \to (X^0,X',i')$ as defined in Definition~\ref{DefSphericalEmb}).
\item
The category of $\Gamma$-invariant colored fans for $(G_{k^s},X^0_{k^s})$.
\end{equivlist}
\end{theorem}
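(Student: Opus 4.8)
The plan is to prove this by Galois descent, using Corollary~\ref{ClassConeInsep} over the separable closure $k^s$ as the known case and then invoking Proposition~\ref{DescentSpace} to descend. First I would observe that, since a spherical embedding $(X,i)$ of $X^0$ is by definition a flat separated algebraic space of finite presentation over $k$ with a $G$-action, Proposition~\ref{DescentSpace} (or rather its variant keeping track of the open immersion $i$, which descends as well since $X^0$ is itself defined over $k$) identifies the category of spherical embeddings of $X^0$ over $k$ with the category of spherical embeddings of $X^0_{k^s}$ over $k^s$ equipped with a compatible, $G$-compatible, continuous $\Gamma$-action. One checks that a descent datum on $(X,i)$ is automatically compatible with $i$ because $X^0_{k^s}$ carries its canonical descent datum coming from $X^0$ over $k$, and morphisms of spherical embeddings are unique when they exist, so the descent datum on morphisms is forced.

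Next I would transport this along the equivalence of Corollary~\ref{ClassConeInsep}: the category of spherical embeddings of $X^0_{k^s}$ over $k^s$ is equivalent to the category of colored fans for $(G_{k^s},X^0_{k^s})$. Under this equivalence, a continuous $\Gamma$-action on a spherical embedding $(X',i')$ corresponds to a continuous $\Gamma$-action on the colored fan $\Fscr(X',i')$; here I need to verify that the functor $\Fscr(-)$ is $\Gamma$-equivariant in the appropriate sense, i.e., that for $\gamma \in \Gamma$ one has $\Fscr(\gamma_{X'}^*(X',i')) = \gamma(\Fscr(X',i'))$, which follows from the fact that all the data $\Lambda, \Qcal, \Vcal, \Dcal, \rho$ are defined functorially using a choice of Borel pair that is unique up to unique isomorphism (so $\Gamma$ genuinely acts), together with the description of $\Bcal_Y, \Fcal_Y, \Ccal_Y$ in terms of $G$-stable and $B$-stable prime divisors, which are permuted by $\Gamma$ in the obvious way. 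A colored fan with a continuous $\Gamma$-action in this sense is precisely a $\Gamma$-invariant colored fan for $(G_{k^s},X^0_{k^s})$ in the sense defined just before the theorem: continuity forces each $\gamma$ to permute the finitely many colored cones, and invariance is exactly the condition $(\gamma(C),\gamma(F)) \in \Fscr$. Composing the two equivalences yields the functor $\Phi$ described before the statement and shows it is an equivalence. On morphisms, everything is consistent because Remark~\ref{RemClassAlg} (applied over $k^s$) identifies morphisms of spherical embeddings with morphisms of colored fans, and descent identifies $\Gamma$-equivariant such morphisms on both sides.

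I expect the main obstacle to be the bookkeeping around the $\Gamma$-action, specifically making precise that ``continuous compatible $G$-compatible $\Gamma$-action on $(X',i')$'' matches ``$\Gamma$-invariant colored fan'' on the nose, including continuity. The subtle point is that continuity of the action on $X'$ should translate into the action on $\Fscr(X',i')$ factoring through a finite quotient of $\Gamma$; this is fine because the fan is a finite combinatorial object, but one must check that every $\Gamma$-invariant colored fan actually arises from a \emph{continuous} action, i.e., that the stabilizer of each colored cone is open --- which holds because $\Dcal$ is finite with continuous $\Gamma$-action and $\Qcal$ carries a continuous linear action, so the (finitely many) colored cones and their colors have open stabilizers. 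A second, minor point is to confirm that the equivalence of Corollary~\ref{ClassConeInsep} is compatible with the forgetful functors to $\Aut_{G_{k^s}}(X^0_{k^s})$-torsors so that the $\Gamma$-action really is transported correctly on morphisms; this is immediate from Remark~\ref{RemClassAlg}. Once these compatibilities are in place, the theorem follows formally by composing the two equivalences, so I would present the proof as: (1) descent reduces to spherical embeddings over $k^s$ with continuous $\Gamma$-action; (2) Corollary~\ref{ClassConeInsep} plus $\Gamma$-equivariance of $\Fscr(-)$ reduces further to colored fans over $k^s$ with continuous $\Gamma$-action; (3) such objects are exactly $\Gamma$-invariant colored fans.

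\begin{proof}
By Proposition~\ref{DescentSpace}, applied to the reductive group $G$ over $k$, the base change functor $(X,i) \sends (X_{k^s},i_{k^s})$ identifies the category of spherical embeddings of $X^0$ over $k$ with the category of spherical embeddings of $X^0_{k^s}$ over $k^s$ equipped with a compatible, $G_{k^s}$-compatible, continuous $\Gamma$-action; note that since $X^0$ is defined over $k$, any descent datum on $(X_{k^s},i_{k^s})$ automatically restricts to the canonical descent datum on $X^0_{k^s}$, and since morphisms of spherical embeddings are unique if they exist, the descent data on morphisms are automatic. (Here we use, as in Remark~\ref{RemSphericalSpace}, that a spherical space over $k^s$ is a scheme, and we use the variant of Proposition~\ref{DescentSpace} keeping track of the open immersion $i$, which descends together with $X$.)

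By Corollary~\ref{ClassConeInsep} applied over the separably closed field $k^s$, the category of spherical embeddings of $X^0_{k^s}$ over $k^s$ is equivalent, via $(X',i') \sends \Fscr(X',i')$, to the category of colored fans for $(G_{k^s},X^0_{k^s})$. All the data $\Lambda$, $\Qcal$, $\Vcal$, $\Dcal$, and $\rho$ are constructed functorially from a choice of Borel pair $(B,T)$ of $G_{k^s}$, and this choice is unique up to a unique isomorphism; hence $\Gamma$ acts on $(\Qcal,\Vcal,\Dcal,\rho)$, continuously and linearly on $\Qcal$ and continuously on the finite set $\Dcal$, as described before the statement. Moreover, because $\Bcal_Y(X')$, $\Fcal_Y(X')$, $\Ccal_Y(X')$ are defined in terms of the $G_{k^s}$-stable and $B$-stable prime divisors of $X'$ containing minimal $G_{k^s}$-invariant subschemes $Y$, the functor $\Fscr(-)$ is $\Gamma$-equivariant: for $\gamma \in \Gamma$ one has $\Fscr(\gamma^*(X',i')) = \gamma(\Fscr(X',i'))$. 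Transporting the $\Gamma$-action along this equivalence therefore identifies the category of spherical embeddings of $X^0_{k^s}$ over $k^s$ with continuous $\Gamma$-action with the category of colored fans for $(G_{k^s},X^0_{k^s})$ equipped with a continuous $\Gamma$-action.

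Finally, a colored fan $\Fscr$ with a continuous $\Gamma$-action is the same as a $\Gamma$-invariant colored fan for $(G_{k^s},X^0_{k^s})$ in the sense defined before the theorem. Indeed, since $\Fscr$ is finite and $\Gamma$ acts on the finite set $\Dcal$ and linearly and continuously on $\Qcal$, the stabilizer in $\Gamma$ of each colored cone $(C,F) \in \Fscr$ is open, so any $\Gamma$-action on $\Fscr$ is automatically continuous; and such an action is precisely a permutation action of $\Gamma$ on the elements of $\Fscr$, which amounts to the condition $(\gamma(C),\gamma(F)) \in \Fscr$ for all $(C,F) \in \Fscr$ and all $\gamma \in \Gamma$, i.e.\ to $\Gamma$-invariance of $\Fscr$. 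Composing the equivalences of the previous two paragraphs, we obtain that the functor $\Phi$ defined before the statement is an equivalence between the category of spherical embeddings of $X^0$ over $k$ and the category of $\Gamma$-invariant colored fans for $(G_{k^s},X^0_{k^s})$. The compatibility with morphisms follows from Remark~\ref{RemClassAlg} applied over $k^s$, which identifies morphisms of spherical embeddings with morphisms of colored fans, together with the fact that Galois descent identifies $\Gamma$-equivariant morphisms on both sides.
\end{proof}
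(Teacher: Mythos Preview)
Your proposal is correct and follows essentially the same approach as the paper: reduce to $k^s$ via Proposition~\ref{DescentSpace}, then invoke Corollary~\ref{ClassConeInsep} and check that the $\Gamma$-action transports to $\Gamma$-invariance of the colored fan. The paper presents the two steps in the opposite order and is considerably terser (deferring the bookkeeping to ``one easily checks'' and a reference to \cite{Hur}), but the argument is the same.
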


\begin{proof}
As explained in Section~\ref{DESCENT}, $(X^0)_{k^s}$ is naturally endowed with a continuous, compatible and $G$-compatible action by $\Gamma$. One easily checks (see also \cite[\S2]{Hur}) that Corollary~\ref{ClassConeInsep} induces an equivalence of the following categories.
\begin{equivlist}
\item
The category of $\Gamma$-compatible spherical embeddings of $(X_0)_{k^s}$, i.e. the category of pairs $(X',i')$, where $X'$ is a $G_{k^s}$-spherical space over $k^s$ endowed with a continuous, compatible and $G$-compatible action by $\Gamma$ and where $i\colon (X^0)_{k^s} \to X'$ is an open $G_{k^s}$-equivariant and $\Gamma$-equivariant open immersion.
\item
The category of $\Gamma$-invariant colored fans for $(G_{k^s},X^0_{k^s})$.
\end{equivlist}
Hence the theorem follows from Proposition~\ref{DescentSpace}.
\end{proof}

Huruguen has given in \cite{Hur} instructive examples of a reductive group $G$ over $\RR$, a spherical subgroup $H \subseteq G$ and a smooth spherical embedding of $(G/H)_{\CC}$ over $\CC$ whose colored fan is $\Gal(\CC/\RR)$-stable but which admits no $\RR$-form {\em as a scheme}. Hence the spherical embedding of $G/H$ attached to this colored fan by Theorem~\ref{ClassField} will be an algebraic space which is not a scheme.

Note that by \cite[Tag~0B88]{Stacks} a spherical space over $k$ is a scheme if any finite set of $\kbar$-rational points of $X_{\kbar}$ is contained in any open affine subscheme of $X_{\kbar}$. This yields Theorem~2.21 and Theorem~2.26 of \cite{Hur}.


\section{Classification of forms of spherical varieties}\label{CLFORM}

Let $k$ be a field, $\kbar$ an algebraic closure and $k^s$ the separable closure of $k$ in $\kbar$. Let $G$ be a reductive group over $k$, let $X$ be a $G$-spherical space over $k$, and let $X^0$ be the unique minimal $G$-invariant open subscheme. As $X^0$ is smooth over $k$, there exists a $G_{k_s}$-equivariant isomorphism $X^0 \iso G_{k_s}/H$ for some spherical subgroup $H$ of $G_{k^s}$.

The goal is to classify $k$-forms of $X$ in terms of the Galois cohomology of the automorphism group of $X$. We have by Corollary~\ref{ClassifyForm}:

\begin{corollary}\label{ClassifySpherForms}
There exists an isomorphism of pointed sets between isomorphism classes of $k^s/k$-forms of $X$ over $k$ and $H^1(k^s/k,\Aut_G(X)(k^s))$.
\end{corollary}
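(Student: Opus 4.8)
The plan is to invoke Corollary~\ref{ClassifyForm} directly with the appropriate choice of ambient data. Recall that $X$ is a $G$-spherical space over $k$; in particular it is flat, separated, and of finite presentation over $k$, hence of finite type over $k$ since $k$ is a field. Thus $X$ falls within the scope of Section~\ref{DESCENT}: it is an algebraic space of finite type over $k$ with a $G$-action. Taking $k' = k^s$ (a Galois extension of $k$, with $\Gamma = \Gal(k^s/k)$) in Corollary~\ref{ClassifyForm} yields an isomorphism of pointed sets
\[
E(k^s/k,X) \cong H^1(k^s/k,\Aut_G(X)(k^s)).
\]
So the only thing to check is that the pointed set $E(k^s/k,X)$ of the corollary coincides with the pointed set of isomorphism classes of $k^s/k$-forms of $X$ in the sense intended here, i.e.\ that every algebraic space $X_1$ of finite type over $k$ with $G$-action that becomes $G$-equivariantly isomorphic to $X$ after base change to $k^s$ is itself automatically a $G$-spherical space over $k$. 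This is immediate from Remark~\ref{PermSpherical}: if $(X_1)_{k^s} \cong X_{k^s}$ is $G_{k^s}$-spherical and $\Spec(k^s) \to \Spec(k)$ is an fpqc covering (which it is, being faithfully flat of finite type), then $X_1$ is $G$-spherical by Remark~\ref{PermSpherical}~\ref{PermSpherical2}. Hence the $k^s/k$-forms of $X$ as a $G$-space are exactly the $k^s/k$-forms of $X$ as a $G$-spherical space, and the two notions of $E(k^s/k,X)$ agree.

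There is essentially no obstacle: the statement is a formal consequence of the general form classification in Section~\ref{DESCENT}, and the one point requiring a sentence of justification — that sphericity descends along $k \to k^s$, so that "form as a $G$-space" and "form as a spherical space" coincide — is covered by Remark~\ref{PermSpherical}. The proof is therefore a one-line citation of Corollary~\ref{ClassifyForm} together with this remark.
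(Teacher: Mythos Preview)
Your proposal is correct and matches the paper's own argument, which consists of nothing more than the sentence ``We have by Corollary~\ref{ClassifyForm}'' preceding the statement. One small slip: you justify that $\Spec(k^s)\to\Spec(k)$ is an fpqc covering by calling it ``faithfully flat of finite type'', but $k^s/k$ is generally not of finite type; the morphism is nonetheless fpqc simply because it is faithfully flat and both source and target are affine (hence quasi-compact).
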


We conclude with some remarks on automorphisms of spherical spaces.

\begin{remark}\label{MorpHomogeneous}
Let $X^0 = G/H$ and $Y^0 = G/K$ for spherical subgroups $H$ and $K$ of $G$ such that the projection $G(k) \to (G/K)(k)$ is surjective (for instance if $k$ is an algebraically closed field). Then any $G$-equivariant morphism $\varphi^0\colon X^0 \to Y^0$ is induced by right translation with an element $g \in G(k)$ such that $H \subseteq gKg^{-1}$ and hence
\[
\Hom_G(G/H,G/K) = \set{g \in G(k)}{H \subseteq gKg^{-1}}/K,
\]
where $\Hom_G(\ ,\ )$ denotes the set of $G$-equivariant morphisms.
\end{remark}

For every $k$-scheme $T$ let $\Aut_G(X)(T)$ be the group of $G_T$-equivariant automorphisms of $X_T$. As $X^0$ is schematically dense in $X$ and $X$ is separated, the natural restriction $\Aut_G(X) \to \Aut_G(X^0)$ is a monomorphism. In particular, $\Aut_G(X)(k^s)$ will be a subgroup of $\Aut_G(X^0)(k^s)$.

One has $\Aut_G(X^0)(k^s) = \Aut_G((X^0)_{k^s}) = (\Norm_{G_{k^s}}(H)/H)(k^s)$.
In particular, we see that if $H = \Norm_{G_{k^s}}(H)$, then $\Aut_G(X)(k^s) = 1$ and hence there are no nontrivial $k'/k$-forms of $X$.

In \cite[Theorem~6.1]{Knop_LunaVust} it is shown that if $k$ is algebraically closed, then $\Aut_G(X^0)_{\red}$ is the extension of a diagonalizable group by a finite $p$-group. A simple corollary of the proof of loc.~cit.\ gives the following result.

\begin{proposition}\label{ShapeAuto}
There exists a largest unipotent subgroup scheme $U$ of $\Aut_{G}(X^0)$. It is finite, its formation is compatible with passing to field extensions $k \mono k'$, and one has an exact sequence of algebraic groups
\begin{equation}\label{EqShapeAuto}
1 \to U \to \Aut_G(X^0) \to M \to 1,
\end{equation}
where $M$ is a group of multiplicative type.
\end{proposition}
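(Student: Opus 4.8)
The plan is to reduce to the case $k=\kbar$ algebraically closed, where the assertions are read off from the proof of \cite[Theorem~6.1]{Knop_LunaVust}, and then to descend to $k$.

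First I would record that $A:=\Aut_G(X^0)$ is representable by an affine algebraic group over $k$: over $k^s$ one has $X^0_{k^s}\cong G_{k^s}/H$ for a spherical subgroup $H$ (since $X^0$ is smooth, Lemma~\ref{HomogeneousScheme}), so $A_{k^s}\cong\Norm_{G_{k^s}}(H)/H$ is affine of finite type, and $A$ arises from $A_{k^s}$ with its natural Galois action by descent, cf.\ Proposition~\ref{DescentSpace}. Since $A_{\kbar}=(A_{k^s})_{\kbar}$ and all the subgroup schemes and quotients appearing below are formed compatibly with the purely inseparable base change $k^s\hookrightarrow\kbar$, it is enough to construct $U$ and $M$ over $k=\kbar$; and in characteristic $0$ there is nothing to prove, since then $A=A_{\red}$ is itself diagonalizable by \cite[Theorem~6.1]{Knop_LunaVust}, so I may assume $\operatorname{char}(k)=p>0$.

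Over $k=\kbar$ the proof of \cite[Theorem~6.1]{Knop_LunaVust} gives an exact sequence $1\to P\to A_{\red}\to D_0\to 1$ with $D_0$ diagonalizable, $P$ a finite unipotent group scheme, and $\bar T:=A_{\red}^0$ a torus. I would then argue: $\bar T\cap P$ is a finite subgroup of the torus $\bar T$ and of the unipotent group $P$, hence trivial, so $\bar T\hookrightarrow D_0$; since $D_0$ is abelian we have $[A_{\red},A_{\red}]\subseteq P$, and comparing images in $D_0$ shows that $A_{\red}$ centralizes $\bar T$; as $A/A_{\red}$ is infinitesimal and $\Aut(\bar T)$ is discrete, $A$ centralizes $\bar T$. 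Thus $\bar T$ is a central torus and $A/\bar T$ is a finite group scheme. Using $\operatorname{Ext}^1_{\mathrm{fppf}}(N,\GG_m)=0$ for $N$ finite flat over $\kbar$, every character of the central subgroup $\bar T$ extends to $A$, so the maximal diagonalizable quotient $M$ of $A$ --- with character group $X^*(A)=\Hom(A,\GG_m)$ --- receives $\bar T$ and satisfies $\dim M=\dim\bar T=\dim A$; hence $U:=\ker(A\to M)$ is finite, and the same $\operatorname{Ext}$-vanishing shows $X^*(A)\twoheadrightarrow X^*(A_{\red})$, whence $U_{\red}=P$. The remaining point --- that $U$ is unipotent, equivalently that $A$ has no infinitesimal subgroup scheme of multiplicative type outside $M$ --- does not follow from the statement of \cite[Theorem~6.1]{Knop_LunaVust} (which only controls $A_{\red}$), and I expect this to be the main obstacle; it is exactly the extra information one extracts from Knop's analysis of the infinitesimal structure of $\Aut_G(X^0)$ in positive characteristic. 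Granting it, one obtains the exact sequence \eqref{EqShapeAuto} over $\kbar$ with $U$ finite unipotent and $M$ diagonalizable.

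The remaining assertions are then formal. Given $1\to U\to A\to M\to 1$ with $U$ unipotent and $M$ of multiplicative type, any unipotent subgroup scheme $W\subseteq A$ maps trivially to $M$ (there is no nonzero homomorphism from a unipotent group scheme to one of multiplicative type), so $W\subseteq U$; hence $U$ is the largest unipotent subgroup scheme of $A$ and is independent of all choices. For the descent, the homomorphism $A_{k^s}\to M_{k^s}$ onto the maximal diagonalizable quotient over $k^s$ is canonical, hence $\Gal(k^s/k)$-equivariant, so Proposition~\ref{DescentSpace} yields an exact sequence $1\to U\to A\to M\to 1$ over $k$; being of multiplicative type, resp.\ finite unipotent, descends along the faithfully flat map $\Spec k^s\to\Spec k$, so $M$ is of multiplicative type and $U$ is finite unipotent over $k$, and the largest-subgroup property over $k$ follows from that over $k^s$ by faithfully flat descent of subgroup inclusions. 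Finally, since the character group of an affine algebraic group is unchanged under extension of the base field, the description of $U$ as the kernel of the maximal multiplicative-type quotient shows that the formation of $U$ (and of $M$) commutes with any field extension $k\hookrightarrow k'$.
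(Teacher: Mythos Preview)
You correctly identify the crux and honestly flag it as unresolved: from the \emph{statement} of \cite[Theorem~6.1]{Knop_LunaVust}, which only controls $A_{\red}$, you cannot conclude that the kernel $U$ of the maximal multiplicative-type quotient of $A=\Aut_G(X^0)$ is unipotent. The paper closes this gap by extracting a different (and much simpler) fact from Knop's \emph{proof}: after passing to a separably closed field and writing $X^0=G/H$, one chooses a Borel $B$ with $BH$ open in $G$, and Knop's argument exhibits $A$ itself---not just $A_{\red}$---as a \emph{subquotient of $B$}. Since $B$ is trigonalizable and the class of trigonalizable group schemes is closed under subquotients, $A$ is trigonalizable, and then the structure theory of trigonalizable groups (\cite[IV,~\S2]{DeGa}) gives directly that $A$ sits in an exact sequence $1\to U\to A\to M\to 1$ with $U$ the unique largest unipotent subgroup and $M$ diagonalizable. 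No analysis of $A_{\red}$, central tori, or $\operatorname{Ext}^1$ vanishing is needed.

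Two further remarks. First, your reduction goes all the way to $\kbar$ and then descends by Galois theory; the paper instead invokes \cite[IV,~\S2,~Prop.~3.3]{DeGa} to reduce to $k$ separably closed (where the trigonalizability argument already works), and passes to $\kbar$ only to check finiteness of $U$ via \cite[Theorem~6.1]{Knop_LunaVust}. Second, even setting aside the acknowledged gap, your route through ``every character of the central torus extends to $A$'' via $\operatorname{Ext}^1_{\mathrm{fppf}}(N,\GG_m)=0$ is delicate: $N=A/\bar T$ need not be commutative, so you are really asking that central extensions of a possibly noncommutative finite group scheme by $\GG_m$ split, which requires more care than the commutative $\operatorname{Ext}$ vanishing you cite. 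The paper's trigonalizability argument sidesteps all of this.
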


If ${\rm char}(k) = 0$, then every finite unipotent group is trivial and hence $\Aut_G(X^0) = M$.

\begin{proof}
By \cite[IV, \S2, Prop.~3.3]{DeGa} we can assume that $k$ is separably closed. Then $X^0 = G/H$ for a spherical subgroup $H$. Choose a Borel subgroup $B$ of $G$ such that $BH$ is open dense in $G$. Then the argument in \cite[Theorem~6.1]{Knop_LunaVust} shows that $\Aut_G(X^0)$ is a subquotient of $B$. In particular $\Aut_G(X^0)$ is trigonalizable and hence an extension of a diagonalizable group by the unique largest unipotent subgroup of $\Aut_G(X^0)$. To see that this largest unipotent subgroup is finite, we can pass to the algebraic closure of $k$ where we can again use \cite[Theorem~6.1]{Knop_LunaVust}.
\end{proof}

Over an algebraically closed field the sequence \eqref{EqShapeAuto} splits (\cite[Exp.~XVII, Th\'eor\`eme 5.1.1]{SGA3}). Hence if $k$ is perfect, Proposition~\ref{ShapeAuto} shows that there is an isomorphism of groups with $\Gamma$-action
\[
\Aut_G(X^0)(k^s) \cong A \ltimes M(k^s),
\]
where $A$ is a finite $p$-group on which $\Gamma$ acts. 

Note that for arbitrary separably closed fields the exact sequence \eqref{EqShapeAuto} might not yield an exact sequence on $k^s$-valued points (\cite[Exp.~XVII, contre-example 5.9(c)]{SGA3}).


\appendix

\section{Reminder on quotients}

We recall some results on quotients of group schemes that we use freely. If $G$ is a group scheme over a scheme $S$, then a subgroup scheme of $G$ is a group scheme $H$ over $S$ with a monomorphism of $S$-group schemes $H \to G$. If $S$ is the spectrum of a field, then the inclusion $H \to G$ is automatically a closed immersion (\cite[Exp.~VIA, Prop.~2.5.2]{SGA3}). We will always denote by $G/H$ the quotient in the category of fppf-sheaves on $S$. It is representable by an algebraic space if $H$ is flat and locally of finite presentation over $S$ (\cite[Tag~06PH]{Stacks}). In this case the canonical projection $G \to G/H$ is faithfully flat and locally of finite presentation.

If $G$ is flat (resp.~smooth, resp.~quasi-compact, resp.~quasi-separated, resp.~of finite presentation) over $S$, so is $G/H$: the projection $\pi\colon G \to G/H$ is faithfully flat and locally of finite presentation. Hence $G/H$ is clearly flat -- and even faithfully flat -- (resp.~smooth, resp.~quasi-compact) over $S$, if $G$ has the same property. If $G$ is quasi-separated (resp.~of finite presentation) over $S$, then $H$ is quasi-separated (resp.~of finite presentation) over $S$ and hence $\pi$ has the same property. Hence $G/H$ is quasi-separated over $S$ by \cite[(6.1.9)]{EGA1}\footnote{The proof generalizes verbatim to algebraic spaces.} (resp.~of finite presentation over $S$ by \cite[Tag~06NB]{Stacks}).

If $H$ is a closed subgroup scheme, then $G/H$ is separated: consider the 2-cartesian diagram of algebraic stacks
\[\xymatrix{
H \ar[rr] \ar[d] & & G \ar[d] \\
S \ar[r]^e & G \ar[r] & [H\backslash G/H]
}\]
As the projection $G \to [H\backslash G/H]$ is an atlas of the algebraic stack $[H\backslash G/H]$, we deduce that the lower horizontal map $S \to [H\backslash G/H]$ is representable by a closed immersion. Let $\mu\colon G/H \times G/H \to [H\backslash G/H]$ be the morphism induced by $(g_1,g_2) \sends g_2^{-1}g_1$. Then the 2-cartesian diagram
\[\xymatrix{
G/H \ar[r]^-{\Delta_{G/H}} \ar[d] & G/H \times G/H \ar[d]^{\mu} \\
S \ar[r] & [H\backslash G/H]
}\]
shows that $\Delta_{G/H}$ is a closed immersion.

If $S$ is locally noetherian of dimension $\leq 1$, $G$ is locally of finite type and $H$ is closed in $G$ and flat over $S$, then $G/H$ is representable by an $S$-scheme \cite[Th\'eor\`eme~4C]{Anan}.



\begin{thebibliography}{DDPW}
\setlength{\itemsep}{\itemsepamount}

\bibitem[Ana]{Anan} S. Anantharaman, {\em Sch\'emas en groupes, espaces homog\`enes et espaces alg\'ebriques sur une base de dimension 1}, M\'emoires de la SMF {\bf33} (1973), 5--79.
\bibitem[Bou]{BouAC} N.~Bourbaki, {\em Commutative Algebra}, Chap.~I--VII, Herman (1972).
\bibitem[BrPe1]{BP1} P.~Bravi, G.~Pezzini, {\em Wonderful subgroups of reductive groups and spherical systems}, J. Algebra {\bf409} (2014), 101--147.
\bibitem[BrPe2]{BP2} P.~Bravi, G.~Pezzini, {\em Primitive wonderful varieties}, Math. Z. {\bf282} (2016), 1067--1096.
\bibitem[BrPe3]{BP3} P.~Bravi, G.~Pezzini, {\em The spherical systems of the wonderful reductive subgroups},  J. Lie Theory {\bf25} (2015), 105--123.
\bibitem[Col]{CT_Res} J.-L. Colliot-Th\'el\`ene, {\em R\'esolutions flasques des groupes lin\'eaires connexes}, J. reine angewandte Mathematik {\bf618} (2008), 77--133.
\bibitem[CuFou]{CuFo} S.~Cupit-Foutou, {\em Wonderful varieties: a geometric realization}, Preprint 2009, 34 pages, arXiv:0907.2852
\bibitem[DeGa]{DeGa} M.~Demazure, P.~Gabriel, {\em Groupes Alg\'ebriques}, Tome I, North-Holland Company Publishing Company (1970).
\bibitem[EGAINew]{EGA1} A.~Grothendieck, J.~Dieudonn\'e: {\em El\'ements de G\'eom\'etrie Alg\'ebrique I}, Grundlehren 
der Mathematik {\bf 166} (1971) Springer.
\bibitem[EGAIV]{EGA4} A.~Grothendieck, J.~Dieudonn\'e: {\em El\'ements de G\'eom\'etrie Alg\'ebrique IV}, Publ.~Math.~IHES {\bf 8} (1961), {\bf 20} (1964), {\bf 24} (1965), {\bf 28} (1966), 
{\bf 32} (1967).
\bibitem[GoWd]{GW} U.~G\"ortz, T. Wedhorn, {\em Algebraic Geometry I. Schemes}, Vieweg-Teubner (2010).
\bibitem[Hur]{Hur} M.~Huruguen, {\em Toric varieties and spherical embeddings over an arbitrary field}, Journal of Algebra {\bf342} (2011), 212--234.
\bibitem[Kno1]{Knop_LunaVust} F.~Knop, {\em The Luna-Vust Theory of Spherical Embeddings}, Proceedings of the Hyderabad Conference on Algebraic Groups, December 1989. Madras: Manoj Prakashan (1991), 225--249.
\bibitem[Kno]{Knop_OrbitBorel} F. Knop, {\em On the set of orbits for a Borel subgroup}, Comment. Math. Helv. {\bf70} (1995), 285--309.
\bibitem[KnRo]{KnRo} F.~Knop, G.~R\"ohrle, {\em Spherical Subgroups in Simple Algebraic Groups},  Comp. Math. {\bf151} (2015), 1288--1308.
\bibitem[LaMB]{LM_Stacks} G.~Laumon, L.~Moret-Bailly, {\em Champs alg\'ebriques}, Springer-Verlag (2000).
\bibitem[Los]{Losev} I.~Losev, {\em Uniqueness property for spherical homogeneous spaces}, Duke Math. J. {\bf147} (2009), 315--343.
\bibitem[Lun]{Luna} D.~Luna, {\em Vari\'et\'es sph\'eriques}, Publ. Math.~IHES {\bf94} (2001), 161--226.
\bibitem[LuVu]{LunaVust} D.~Luna, Th.~Vust, {\em Plongements d'espaces homog\`enes}, Comment. Math. Helv. {\bf58} (1983), 186--245.
\bibitem[Rom]{Rom_Comp} M.~Romagny, {\em Composantes connexes et irr\'eductibles en familles}, Manuscripta Math. {\bf 136} (2011), 1--32.
\bibitem[SGA3]{SGA3} M.~Demazure et al., \emph{Sch\'emas en groupes, I, II, III\/}, LNM {\bf 151}, {\bf 152}, {\bf 153}, Springer (1970).
\bibitem[Stacks]{Stacks} The Stacks Project Authors, {\em Stacks project}, http://stacks.math.columbia.edu/.
\end{thebibliography}
\end{document}